%
%
\documentclass[fleqn,12pt,twoside]{article}


\usepackage[headings]{espcrc1}
\readRCS
$Id: espcrc1.tex,v 1.2 2004/02/24 11:22:11 spepping Exp $
\ProvidesFile{espcrc1.tex}[\filedate \space v\fileversion
     \space Elsevier 1-column CRC Author Instructions]


\usepackage{graphicx}
\usepackage[figuresright]{rotating}
\newtheorem{theorem}{Theorem}

\newtheorem{claim}{Claim}

\newtheorem{corollary}[theorem]{Corollary}

\newtheorem{lemma}[theorem]{Lemma}

\newtheorem{problem}{Problem}
\newtheorem{proposition}[theorem]{Proposition}

\newtheorem{example}[theorem]{Example}

\newenvironment{proof}[1][Proof.]{\begin{trivlist}
\item[\hskip \labelsep {\bfseries #1}]}{\end{trivlist}}

\newenvironment{acknowledgement}[1][Acknowledgement]{\begin{trivlist}
\item[\hskip \labelsep {\bfseries #1}]}{\end{trivlist}}

\newcommand{\AmS}{{\protect\the\textfont2
  A\kern-.1667em\lower.5ex\hbox{M}\kern-.125emS}}

\hyphenation{author another created financial paper re-commend-ed Post-Script}

\usepackage{amsmath}
\usepackage{amsfonts}
\usepackage{amssymb}

\numberwithin{theorem}{section}

\title{Some Results on Cyclic Interval Edge Colorings of Graphs}

\author{Armen S. Asratian\address[MCSD]{Department of Mathematics, Link\"oping University,\\
SE-581 83 Link\"oping, Sweden}\thanks{email: armen.asratian@liu.se},
    Carl Johan Casselgren\addressmark[MCSD]\thanks{email: carl.johan.casselgren@liu.se},
    Petros A. Petrosyan\address{Department of Informatics and Applied Mathematics,\\
Yerevan State University, 0025, Armenia}%
\address{Institute for Informatics and Automation Problems,\\
National Academy of Sciences, 0014, Armenia}%
\thanks{email: petros\_petrosyan@ysu.am, pet\_petros@ipia.sci.am}}


\runtitle{Some Results on Cyclic Interval Edge Colorings of Graphs
}\runauthor{Armen S. Asratian, Carl Johan Casselgren, Petros A.
Petrosyan}

\begin{document}

\maketitle

\begin{abstract}
A proper edge coloring of a graph $G$ with colors $1,2,\dots,t$ is
called a \emph{cyclic interval $t$-coloring} if for each vertex $v$
of $G$ the edges incident to $v$ are colored by consecutive colors,
under the condition that color $1$ is considered as consecutive to
color $t$. We prove that a bipartite graph $G$ of even maximum
degree $\Delta(G)\geq 4$ admits a cyclic interval
$\Delta(G)$-coloring if for every vertex $v$ the degree $d_G(v)$
satisfies either $d_G(v)\geq \Delta(G)-2$ or $d_G(v)\leq 2$. We also
prove that every Eulerian bipartite graph $G$ with maximum degree at
most $8$ has a cyclic interval coloring. Some results are obtained
for $(a,b)$-biregular graphs, that is, bipartite graphs with the
vertices in one part all having degree $a$ and the vertices in the
other part all having degree $b$; it has been conjectured that all
these have cyclic interval colorings. We show that all
$(4,7)$-biregular graphs as well as all $(2r-2,2r)$-biregular
($r\geq 2$) graphs have cyclic interval colorings. Finally, we prove
that all complete multipartite graphs admit cyclic interval
colorings; this proves a conjecture of Petrosyan and Mkhitaryan. \\

Keywords: edge coloring, interval coloring, cyclic interval
coloring, bipartite graph, biregular graph, complete multipartite
graph.
\end{abstract}

\section{Introduction}\

 We use \cite{West} for terminology and notation not defined here. 
All graphs considered are
finite, undirected, allow multiple edges and contain no loops, unless otherwise stated. A \emph{simple} graph is a graph
with no loops or multiple edges.
Let $V(G)$ and $E(G)$ denote the sets of vertices and
edges of a graph $G$, respectively.
A proper $t$-edge coloring of a graph $G$ is a
    mapping $\alpha:E(G)\longrightarrow  \{1,\dots,t\}$ such that
    $\alpha(e)\not=\alpha(e')$ for every pair of adjacent
    edges $e$ and $e'$ in $G$. If $e\in E(G)$ and $\alpha(e)=k$ then
    we say that the edge $e$ is {\em colored $k$}.
    We denote by $\Delta(G)$ the maximum degree of
    vertices of a graph $G$, and by $d_G(v)$ the degree of a
    vertex $v$ in $G$. The {\em chromatic index $\chi'(G)$} 
		of a graph $G$
    is the minimum number $t$ for which there exists a proper
    $t$-edge coloring of $G$. By K\H onig's edge coloring
    theorem, $\chi'(G)=\Delta(G)$ for any bipartite graph $G$
    and by Vizing's theorem $\chi'(G)\leq \Delta(G)+1$
    for any simple graph $G$ (see for example \cite{West}).

A proper $t$-edge coloring of a graph $G$ is called 
an \emph{
interval $t$-coloring} if the colors of edges incident to every
vertex $v$ of $G$ form an interval of integers.
 This notion was introduced by Asratian and Kamalian \cite{AsrKam} 
(available in English as \cite{AsrKamJCTB}), motivated by the problem of constructing timetables
 without �gaps� for teachers and classes. Later the theory of interval colorings was developed in
e.g. \cite{AsrCarl,AsrCarlVandWest,CasPetToft,CarlJToft,Giaro,GiaroKub,Hansen,HanLotToft,Kampreprint,KamDiss,KamMirum,PetDM,PetKhachTan,PetKhach,Pyatkin,Seva,YangLi}.
Generally, it is an NP-complete problem to 
determine whether a bipartite graph has an interval 
coloring \cite{Seva}. However some classes of 
graphs have been proved to admit interval colorings. It is known,
for example, that trees, regular and complete bipartite graphs
\cite{AsrKam,Hansen,Kampreprint}, doubly convex bipartite graphs
\cite{KamDiss}, grids \cite{Giaro} and simple outerplanar bipartite
graphs \cite{GiaroKub} have interval colorings. Additionally, all
$(2,b)$-biregular graphs \cite{Hansen,HanLotToft,KamMirum} and
 $(3,6)$-biregular graphs \cite{CarlJToft} admit 
interval colorings, where an
\emph{$(a,b)$-biregular} graph is a bipartite graph
where the vertices in one part all have degree $a$ and the vertices
in the other part all have degree $b$.

Another type of proper $t$-edge colorings, a \emph{cyclic interval
$t$-coloring}, was introduced by de Werra and Solot
\cite{deWerraSolot}. A proper $t$-edge coloring
$\alpha:E(G)\longrightarrow  \{1,\dots,t\}$ of a graph $G$  is
called a \emph{cyclic interval $t$-coloring} if the colors of edges
incident to every vertex $v$ of $G$ either form an interval of
integers or the set $\{1,\ldots,t\}\setminus \{\alpha(e): \text{$e$
is incident to $v$}\}$ is an interval of integers. This notion was
motivated by scheduling problems arising in flexible manufacturing
systems, in particular the so-called \emph{cylindrical open shop
scheduling problem}. Clearly, any interval $t$-coloring of a graph
$G$ is also a cyclic interval $t$-coloring. Therefore all above
mentioned classes of graphs which admit interval edge colorings,
also admit cyclic interval colorings. 
Note that the condition
$\chi'(G)=\Delta(G)$ is necessary for a graph $G$ to admit an
interval edge coloring \cite{AsrKam,AsrKamJCTB}. In contrast with
this, every regular graph $G$ with $\chi'(G)=\Delta(G)+1$ (for
example, $G=K_{2n+1}$) has a cyclic interval
$(\Delta(G)+1)$-coloring. Moreover, for every integer $p\geq 1$
there is a graph $G_p$ with $\chi'(G)= \Delta(G)+p$ which admits a
cyclic interval coloring.
 An example of such a graph is the so-called 
``Shannon's triangle'' 
which is obtained by replacing every edge in a 
triangle $K_3$ with $V(K_3)=\{v_1,v_2,v_3\}$ by $p$ parallel edges.
 Clearly, the maximum degree of this graph is $2p$, 
the chromatic index is $3p$, and a cyclic interval 
coloring of it can be obtained by coloring the edges
 between $v_k$ and $v_{k+1}$ with colors 
$(k-1)p+1,(k-1)p+2,\ldots,(k-1)p+p$, 
for $k=1,2,3$ (where we consider $v_4=v_1$).

Note further that if a graph
has a cyclic interval $t$-coloring, then it does not necessarily have a cyclic interval $(t+1)$-coloring; the complete graph $K_3$ has a cyclic
interval $3$-coloring, but does not admit such a coloring with $4$ colors
(see also Example \ref{example} in Section 4).
Furthermore, the disjoint 
union of graphs with cyclic interval colorings may 
not admit a cyclic interval coloring; 
for instance, the disjoint union of 
$K_3$ and $K_{n}$ does not have a 
cyclic interval coloring for any $n \geq 5$.

Kubale and  Nadolski \cite{KubaleNadol} showed that the problem of
determining whether a given bipartite graph admits a cyclic interval
coloring is $NP$-complete. Some sufficient conditions for a graph to
have a cyclic interval coloring were obtained in
\cite{CarlJToft,Nadol,PetMkhit,deWerraSolot}. Nadolski \cite{Nadol}
proved that any connected graph $G$ with $\Delta(G)\leq 3$ has a cyclic
interval coloring. de Werra and Solot \cite{deWerraSolot} proved
that any outerplanar bipartite graph $G$ has a cyclic interval
$\Delta(G)$-coloring. Petrosyan and Mkhitaryan \cite{PetMkhit}
showed that all complete tripartite graphs are cyclically interval
colorable and conjectured that the same holds for all complete
multipartite graphs. They also proved that if a triangle-free
simple graph $G$ with at least two vertices has a cyclic interval
$t$-coloring, then $t \leq \vert V(G)\vert +\Delta(G)-2$. Casselgren
and Toft \cite{CarlJToft} proved that all $(4,8)$-biregular graphs
admit cyclic interval colorings, and conjectured that the same holds
for all $(a,b)$-biregular graphs. Some other results on this subject
were obtained in \cite{Hertz,KamTrees,KamCycles}. For example,
Altinakar et al. \cite{Hertz} showed that for any graph $G$ with
$\Delta(G)\geq 12$ there is a graph $H_G$ with
$\Delta(H_G)=\Delta(G)$ such that $G$ has an interval $t$-coloring
if and only if $H_G$ has a cyclic interval $t$-coloring. In
\cite{KamTrees,KamCycles}, Kamalian determined all possible values
of $t$ for which simple cycles and trees have a cyclic interval
$t$-coloring.
\bigskip

In the present paper we find new classes of graphs admitting cyclic
interval colorings. We prove that a bipartite graph $G$ with even
maximum degree $\Delta(G)\geq 4$ admits a cyclic interval
$\Delta(G)$-coloring if for every vertex $v$ the degree $d_G(v)$
satisfies either $d_G(v)\geq \Delta(G)-2$ or $d_G(v)\leq 2$. We also
prove that every Eulerian bipartite graph $G$ with maximum degree
$\Delta(G)\leq 8$ has a cyclic interval $\Delta(G)$-coloring. 
Furthermore, some
results are obtained for $(a,b)$-biregular and outerplanar
graphs. 
Finally, we prove that all complete multipartite
graphs admit cyclic interval colorings and consider some problems on
bipartite graphs without cyclic interval colorings.

\section{Cyclic interval colorings of bipartite graphs}\

Before we formulate and prove our results, we introduce some
terminology and notation. A graph $G$ is \emph{cyclically interval
colorable} if it has a cyclic interval $t$-coloring for some
positive integer $t$. The set of all interval cyclically colorable
graphs is denoted by $\mathfrak{N}_{c}$. For a graph $G\in
\mathfrak{N}_{c}$, the least value of $t$ for which it has a cyclic
interval $t$-coloring is denoted by $w_{c}(G)$.
If $\alpha $ is a proper edge coloring of $G$ and $v\in V(G)$, then
$S_{G}\left(v,\alpha \right)$ (or $S\left(v,\alpha \right)$) denotes
the set of colors appearing on edges incident to $v$. 

For two
positive integers $a$ and $b$,
we denote by $\gcd(a,b)$ the greatest common divisor of $a$ and $b$;
if $a\leq b$, then
$\left[a,b\right]$ denotes the interval $\left\{a,\ldots,b\right\}$ of integers. 

A {\em full subdivision} of a graph is a graph obtained by replacing each edge with a path of length $2$.
%
%
A graph $G$ is \emph{Eulerian} if the degree of every vertex of $G$ is
even. Note that if $G$ is connected and Eulerian, then it 
has a closed trail containing every edge of it.
A \emph{$2$-factor} of a graph $G$, where loops are allowed, is a
$2$-regular spanning subgraph of $G$.
We need the following classical result from factor theory
\cite{AkiyamaKano}.\bigskip

\noindent {\bf Petersen's theorem}. {\em Let $G$ be a $2r$-regular
graph (where loops are allowed). Then $G$ can be represented as a
union of edge-disjoint $2$-factors.} \bigskip

The main result of this section is the following:

\begin{theorem}
\label{mytheorem1.2.1} If $G$ is a bipartite graph with
$\Delta(G)=2r$ ($r\geq 2$) and for every $v\in V(G)$, $d_{G}(v)\in
\{1,2,2r-2,2r-1,2r\}$, then $G$ has a cyclic interval $2r$-coloring.
\end{theorem}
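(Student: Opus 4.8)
The plan is to reduce the cyclic interval condition to a statement about which colors are \emph{missing} at each vertex, and then to realize the required pattern through a $2$-factorization of a regular host graph supplied by Petersen's theorem.

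First I would reduce to the case that $G$ is connected: since all components would be colored from the same palette $[1,2r]$ and the cyclic interval condition is local to each vertex, separate cyclic interval $2r$-colorings of the components combine to one of $G$. The key reformulation is then this: it suffices to embed $G$ as a subgraph of a $2r$-regular bipartite multigraph $H$ and to exhibit a decomposition $E(H)=F_1\cup\cdots\cup F_r$ into edge-disjoint $2$-factors. Given such a decomposition, color each $F_i$ with the two consecutive colors $2i-1$ and $2i$; because $H$ is bipartite, each $F_i$ is a disjoint union of even cycles, so the two colors can be alternated around every cycle, making every vertex of $H$ incident to exactly one edge of each of the $2r$ colors. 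Restricting to $E(G)$, a vertex $v$ of $G$ retains precisely the colors on its $d_G(v)$ real edges and loses those on its $2r-d_G(v)$ edges of $H\setminus G$. Checking the five possible degrees shows this restriction is a cyclic interval coloring as soon as, at each vertex of degree $2r-2$, the two edges of $H\setminus G$ lie in a common factor (so the two missing colors form a consecutive pair $\{2i-1,2i\}$), and, at each vertex of degree $2$, its two edges of $G$ lie in a common factor; the degrees $1,2r-1,2r$ impose no condition, since a single present color, a single missing color, and the full palette are each a cyclic interval.

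For the host graph I would take a symmetric \emph{doubling}: two copies $G^{(1)}$ and $G^{(2)}$ of $G$ with the two parts interchanged in the second copy, together with, for each vertex $v$, a bundle of $2r-d_G(v)$ parallel edges joining $v^{(1)}$ to $v^{(2)}$. One checks that $H$ is $2r$-regular and bipartite, that $G^{(1)}\cong G$, and that the deficiency bundle at a vertex of degree $2r-2$ consists of exactly two parallel edges forming a digon, while at a vertex of degree $2$ it consists of $2r-2$ parallel edges. The conditions from the reduction now become purely factor-theoretic: each degree-$(2r-2)$ digon should be a whole component of one factor, and at each degree-$2$ vertex the two real edges should be forced into a single factor. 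The latter is arranged by pairing the $2r-2$ parallel bundle edges into $r-1$ digons, assigning these to $r-1$ distinct factors, and thereby leaving only the two real edges available for the remaining factor through that vertex.

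The main obstacle is producing a single $2$-factorization of $H$ realizing all of these local constraints at once, that is, prescribing a family of vertex-disjoint digons and completing it to a full decomposition into $2$-factors. I would obtain the decomposition from Petersen's theorem applied to the $2r$-regular graph $H$ and then repair it: a violated constraint at a vertex can be corrected by swapping two colors along a suitable alternating even cycle (a Kempe-type interchange), which is available precisely because $H$ is bipartite and hence all relevant cycles are even. Care is needed at the odd degrees $1$ and $2r-1$, where the deficiency bundle has odd size and contributes one leftover single edge that must be absorbed into a longer cycle of some factor; this edge carries no interval constraint, so it may be routed freely. Once every degree-$2$ and degree-$(2r-2)$ vertex meets its requirement, the consecutive-pair coloring of the factors restricts to the desired cyclic interval $2r$-coloring of $G$.
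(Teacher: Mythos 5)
Your reduction is sound and matches the paper's: embed $G$ in a $2r$-regular bipartite host, decompose into $2$-factors $F_1,\dots,F_r$ by Petersen's theorem, color $F_i$ alternately with $2i-1$ and $2i$ (possible since bipartiteness makes each $F_i$ a union of even cycles), and observe that the only genuine constraints arise at vertices of degree $2$ (both real edges in one factor) and degree $2r-2$ (both deficiency edges in one factor). The gap is in the final step, realizing the constrained $2$-factorization. Petersen's theorem gives an \emph{arbitrary} decomposition, and nothing forces the two edges of one of your prescribed digons into a common factor: each parallel edge of the digon can lie on a long cycle of a different factor passing through both $v^{(1)}$ and $v^{(2)}$. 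Your proposed repair by Kempe-type interchanges is not an argument. A swap along an alternating cycle in $F_i\cup F_j$ that fixes one digon may pass through other degree-$2$ or degree-$(2r-2)$ vertices and destroy constraints already satisfied there; at a degree-$2$ vertex, where $r-1$ digons must occupy $r-1$ \emph{distinct} factors, the requirement couples several factors at once, so the interference is global rather than local. You supply no invariant or monovariant showing the process terminates, and without one the "repair" is just a restatement of the problem.

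The missing idea --- and the paper's actual device --- is to use \emph{loops} instead of parallel bundles. First join each odd-degree vertex of one copy of $G$ to its mate in the other copy by a single edge (this disposes of your awkward odd-size bundles), then add $r-1$ loops at each vertex of degree $2$ and one loop at each vertex of degree $2r-2$, obtaining a $2r$-regular graph in which loops are allowed. Since a loop contributes $2$ to the degree, a $2$-factor containing a loop at $v$ can contain no other edge at $v$; consequently, in \emph{every} Petersen decomposition the loops at a vertex automatically fall into distinct factors, which forces the two real edges at each degree-$2$ vertex into one common factor and leaves exactly one factor empty at each degree-$(2r-2)$ vertex --- the constraints are enforced structurally, with no repair step at all. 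Deleting the loops, coloring each factor with its consecutive pair, and restricting to $G$ then completes the proof exactly as in your reduction.
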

\begin{proof} Define an auxiliary graph $G^{\star}$ as follows:
first we take two isomorphic copies $G_{1}$ and $G_{2}$ of the graph
$G$ and join by an edge every vertex with an odd vertex degree in
$G_{1}$ with its copy in $G_{2}$, then for each vertex $u \in
V(G_{1}) \cup V(G_{2})$ of degree $2$, we add $r-1$ loops at $u$,
and for each vertex $v\in V(G_{1}) \cup V(G_{2})$ of degree $2r-2$,
we add a loop at $v$. Clearly, $G^{\star}$ is a $2r$-regular
graph. 
By Petersen's theorem above, $G^{\star}$ can be represented as a union
of edge-disjoint $2$-factors $F_{1},\ldots,F_{r}$. By removing all
loops from $2$-factors $F_{1},\ldots,F_{r}$ of $G^{\star}$, we
obtain that the resulting graph $G^{\prime}$ is a union of edge-disjoint
Eulerian subgraphs $F^{\prime}_{1},\ldots,F^{\prime}_{r}$. Since
$G^{\prime}$ is bipartite, for each $i$ ($1\leq i\leq r$),
$F^{\prime}_{i}$ is a collection of even cycles in $G^{\prime}$, and
we can color the edges of $F^{\prime}_{i}$ alternately with colors
$2i-1$ and $2i$. Let $\alpha$ be the resulting coloring of
$G^{\prime}$. Clearly, $\alpha$ is a proper edge coloring of
$G^{\prime}$ with colors $1,\ldots,2r$, and for each vertex $v\in
V(G^{\prime})$ with $d_{G^{\prime}}(v)=2r$,
$S_{G^{\prime}}(v,\alpha)=[1,2r]$. Since for each vertex $u\in
V(G^{\prime})$ with $d_{G^{\prime}}(u)=2$, there exists  exactly one
Eulerian subgraph $F^{\prime}_{i_{u}}$ such that
$d_{F^{\prime}_{i_{u}}}(u)=2$, we obtain that
$S_{G^{\prime}}(u,\alpha)=[2i_{u}-1,2i_{u}]$ for some $i_{u}$.
Similarly, since for each vertex $v\in V(G^{\prime})$ with
$d_{G^{\prime}}(v)=2r-2$, there exists  exactly one Eulerian subgraph
$F^{\prime}_{i_{v}}$ such that $d_{F^{\prime}_{i_{v}}}(v)=0$, we
obtain that
$S_{G^{\prime}}(v,\alpha)=[1,2r]\setminus[2i_{v}-1,2i_{v}]$ for some
$i_{v}$. Now we can consider the restriction of this proper edge
coloring to the edges of the graph $G$. Clearly, this coloring is a
cyclic interval $2r$-coloring of $G$. $\square$
\end{proof}

From Theorem \ref{mytheorem1.2.1} we deduce a number of corollaries.

\begin{corollary}
\label{corollary1.2.1} Let $H$ be a graph with $\Delta(H)=2r$
($r\geq 2$) where for every  $v\in V(H)$ either $d_H(v)\geq 2r-2$ or
$d_H(v)\leq 2$ holds. Then a full subdivision of $H$ admits a  cyclic
interval $2r$-coloring.
\end{corollary}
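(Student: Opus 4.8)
The plan is to reduce the statement to a direct application of Theorem \ref{mytheorem1.2.1}. Let $G$ denote a full subdivision of $H$, obtained by replacing each edge $uv\in E(H)$ with a path $u\,w_{uv}\,v$ through a new subdivision vertex $w_{uv}$. I want to check that $G$ satisfies the hypotheses of that theorem and then simply invoke it.

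First I would verify that $G$ is bipartite. Since every edge of $G$ joins an original vertex of $H$ to a subdivision vertex, the partition of $V(G)$ into the set of original vertices and the set of subdivision vertices is a proper $2$-coloring of $V(G)$; hence $G$ is bipartite, regardless of whether $H$ itself is bipartite.

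Next I would determine the vertex degrees of $G$. Each subdivision vertex $w_{uv}$ is incident only to the two edges $uw_{uv}$ and $w_{uv}v$, so $d_G(w_{uv})=2$. Each original vertex $v$ is incident in $G$ to exactly one edge of the path replacing each $H$-edge at $v$, so $d_G(v)=d_H(v)$. By the hypothesis on $H$, every original vertex therefore satisfies $d_G(v)=d_H(v)\in\{1,2\}\cup\{2r-2,2r-1,2r\}$, where isolated vertices of $H$ may be discarded since they play no role in any coloring. Combining this with the subdivision vertices, every vertex of $G$ has degree in $\{1,2,2r-2,2r-1,2r\}$. Moreover $\Delta(G)=2r$, because $r\geq 2$ forces $2r\geq 4>2$ and some original vertex attains degree $2r$.

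With these observations in place, $G$ meets all the hypotheses of Theorem \ref{mytheorem1.2.1}, which then produces a cyclic interval $2r$-coloring of $G$, that is, of the full subdivision of $H$. I expect no genuine obstacle in this argument; the only point requiring a moment's care is confirming that subdivision preserves the degree of each original vertex while introducing only degree-$2$ vertices, so that the degree set of $G$ lands precisely inside the admissible set $\{1,2,2r-2,2r-1,2r\}$ demanded by the theorem.
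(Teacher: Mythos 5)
Your proof is correct and is exactly the deduction the paper intends: the paper states this corollary without proof as an immediate consequence of Theorem \ref{mytheorem1.2.1}, relying on the same observations you spell out (the subdivision is bipartite with original versus subdivision vertices as parts, subdivision vertices have degree $2$, original vertices keep their $H$-degrees, and $\Delta(G)=2r$). Your extra care about isolated vertices is a harmless and sensible addition.
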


A direct consequence of Corollary \ref{corollary1.2.1} is the following:

\begin{corollary}
\label{corollary1.2.2} A full subdivision $G$ of a graph $H$ admits a
cyclic interval $\Delta(G)$-coloring if the maximum degree
$\Delta(H)$ is even and differs from the minimum degree of $H$ by at
most 2.
\end{corollary}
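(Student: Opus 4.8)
The plan is to reduce Corollary \ref{corollary1.2.2} directly to Corollary \ref{corollary1.2.1}. Write $\Delta(H)=2r$ for the (even) maximum degree of $H$. The hypothesis that $\Delta(H)$ differs from the minimum degree of $H$ by at most $2$ says precisely that every vertex $v$ of $H$ satisfies $d_H(v)\geq \Delta(H)-2=2r-2$, and of course $d_H(v)\leq 2r$ as well. Thus every vertex of $H$ falls into the first alternative, $d_H(v)\geq 2r-2$, of the degree condition appearing in Corollary \ref{corollary1.2.1}.

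For $r\geq 2$ the conclusion is then immediate: $H$ satisfies the hypotheses of Corollary \ref{corollary1.2.1}, so its full subdivision $G$ admits a cyclic interval $2r$-coloring. It remains only to check that forming a full subdivision preserves the maximum degree. Each original vertex keeps its degree, since every edge of $H$ incident to it contributes exactly one edge at that vertex in $G$, while each newly inserted subdivision vertex has degree $2\leq 2r$; hence $\Delta(G)=\Delta(H)=2r$. Consequently the coloring produced above is a cyclic interval $\Delta(G)$-coloring, as desired.

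The one point requiring separate attention, and the only genuine obstacle, is the boundary case $r=1$, i.e. $\Delta(H)=2$, to which Corollary \ref{corollary1.2.1} does not apply. Here $H$ is a disjoint union of paths and cycles, so its full subdivision $G$ is again such a union, and each cycle of $G$ is even because subdividing a cycle of length $k$ yields a cycle of length $2k$. Coloring the edges of each path and each even cycle of $G$ alternately with the colors $1$ and $2$ gives a proper edge coloring in which every vertex of degree $2$ sees the interval $[1,2]$ and every vertex of degree $1$ sees a single color; since $\Delta(G)=2$, this is a cyclic interval $\Delta(G)$-coloring, completing this case.
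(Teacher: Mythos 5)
Your proof is correct and takes essentially the same route as the paper, which presents this corollary as a direct consequence of Corollary \ref{corollary1.2.1} (observing, as you do, that $\Delta(H)-\delta(H)\leq 2$ forces every vertex into the alternative $d_H(v)\geq 2r-2$, and that subdividing preserves the maximum degree when $\Delta(H)\geq 2$). Your separate treatment of the boundary case $\Delta(H)=2$ is in fact a small improvement in rigor: Corollary \ref{corollary1.2.1} assumes $r\geq 2$, so the paper's ``direct consequence'' silently omits this case, which your alternating $2$-coloring of paths and even cycles settles.
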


In \cite{PetKhach}, Petrosyan and Khachatrian showed that if a bipartite graph is interval colorable, then a full subdivision of this graph is also interval colorable and conjectured that the same holds for all interval colorable graphs. Recently, Pyatkin \cite{Pyatkin2} confirmed this conjecture. 

\begin{corollary}
\label{corollary1.2.3} If $G$ is a bipartite graph with
$\Delta(G)=4$, then $G\in \mathfrak{N}_{c}$ and $w_{c}(G)=4$.
\end{corollary}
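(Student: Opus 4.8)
The plan is to recognize this as an essentially immediate consequence of Theorem~\ref{mytheorem1.2.1}, once one observes that when the maximum degree is $4$ the degree hypothesis of that theorem becomes vacuous. Setting $r=2$ so that $2r=4$, the admissible degree set appearing in Theorem~\ref{mytheorem1.2.1} is
\[
\{1,2,2r-2,2r-1,2r\}=\{1,2,2,3,4\}=\{1,2,3,4\}.
\]
Since every vertex of $G$ has degree at most $\Delta(G)=4$, each vertex of positive degree automatically satisfies $d_{G}(v)\in\{1,2,3,4\}$ (isolated vertices carry no edges and may be ignored). Thus the hypotheses of Theorem~\ref{mytheorem1.2.1} hold, and applying it directly produces a cyclic interval $4$-coloring of $G$. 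This already gives $G\in\mathfrak{N}_{c}$ together with the upper bound $w_{c}(G)\leq 4$.

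For the matching lower bound I would argue through the chromatic index. Any cyclic interval $t$-coloring is in particular a proper $t$-edge coloring, so a graph admitting one must have $t\geq\chi'(G)$; in particular $w_{c}(G)\geq\chi'(G)$. As $G$ is bipartite, K\H onig's edge coloring theorem gives $\chi'(G)=\Delta(G)=4$, and hence $w_{c}(G)\geq 4$. Combining this with the upper bound from the previous paragraph yields $w_{c}(G)=4$.

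I expect no genuine obstacle in this argument. The only point requiring care is the (essentially trivial) verification that the restrictive-looking degree condition of Theorem~\ref{mytheorem1.2.1} imposes no constraint when $2r=4$, so that the theorem is applicable to \emph{every} bipartite graph of maximum degree $4$; the exactness of the value $w_{c}(G)=4$ then follows from the standard bound $w_{c}(G)\geq\chi'(G)=\Delta(G)$ valid for bipartite graphs.
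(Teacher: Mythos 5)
Your proposal is correct and follows essentially the same route as the paper: both reduce the statement to Theorem~\ref{mytheorem1.2.1} with $r=2$ by noting that every vertex degree then lies in $\{1,2,3,4\}$, so the degree hypothesis is vacuous. Your explicit justification of the lower bound $w_{c}(G)\geq\chi'(G)=\Delta(G)=4$ via K\H{o}nig's theorem is a welcome detail that the paper's proof leaves implicit.
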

\begin{proof} Let $G$ be a bipartite graph with maximum degree 4.
Clearly, $d_G(v)\in \{1,2,3,4\}$ for every vertex $v\in V(G)$, which, by Theorem \ref{mytheorem1.2.1},
means that $G$ admits a  cyclic interval $4$-coloring. $\square$
\end{proof}

Our next result concerns bipartite graphs with an odd maximum
degree.

\begin{theorem}
\label{mytheorem1.2.2} If $G$ is a bipartite graph with
$\Delta(G)=2r-1$ ($r\geq 2$) and for every $v\in V(G)$, $d_{G}(v)\in
\{1,2,2r-2,2r-1\}$, then $G\in \mathfrak{N}_{c}$ and
$w_{c}(G)\leq2r$.
\end{theorem}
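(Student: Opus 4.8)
The plan is to reduce to the even maximum degree case already settled in Theorem~\ref{mytheorem1.2.1} by means of a doubling construction. Since $G$ is bipartite with $\Delta(G)=2r-1$, fix a bipartition $(X,Y)$ and build an auxiliary graph $H$ as follows: take two disjoint isomorphic copies $G_1$ and $G_2$ of $G$, and for every vertex $v$ of \emph{odd} degree in $G$ (that is, every vertex of degree $1$ or $2r-1$) join the copy of $v$ in $G_1$ to the copy of $v$ in $G_2$ by a new edge. I would first check that $H$ is bipartite with parts $X_1\cup Y_2$ and $Y_1\cup X_2$, where $X_i,Y_i$ are the copies of $X,Y$ in $G_i$: the edges inside each $G_i$ respect this split, and each added edge joins the two copies of one vertex and hence crosses between the two parts.

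Next I would verify that $H$ satisfies the hypotheses of Theorem~\ref{mytheorem1.2.1} for the same $r$. Adding the joining edges raises each degree-$1$ vertex to degree $2$ and each degree-$(2r-1)$ vertex to degree $2r$, while the even-degree vertices (degrees $2$ and $2r-2$) are untouched. Thus every vertex of $H$ has degree in $\{2,2r-2,2r\}\subseteq\{1,2,2r-2,2r-1,2r\}$, and $\Delta(H)=2r$ since $G$ has a vertex of degree $2r-1$. By Theorem~\ref{mytheorem1.2.1}, $H$ admits a cyclic interval $2r$-coloring $\beta$.

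Finally, let $\alpha$ be the restriction of $\beta$ to the copy $G_1\cong G$; this is a proper edge coloring of $G$ using colors from $[1,2r]$, and I would argue it is a cyclic interval coloring. For a vertex $v$ of even degree $2$ or $2r-2$ in $G$, no edge was added at $v$, so all its incident edges lie in $G_1$ and $S_G(v,\alpha)=S_H(v,\beta)$ is already a cyclic interval. The vertices requiring care are the odd-degree ones, where exactly one incident edge of $H$ is deleted upon restriction. If $d_G(v)=1$, then $v$ retains a single incident edge in $G_1$, whose one color is trivially a cyclic interval. The key case is $d_G(v)=2r-1$: here $d_H(v)=2r$, so in the $2r$-coloring $\beta$ the vertex $v$ must receive all $2r$ colors, i.e. $S_H(v,\beta)=[1,2r]$; deleting the single edge joining $G_1$ to $G_2$ removes one color $c$, leaving $S_G(v,\alpha)=[1,2r]\setminus\{c\}$, whose complement $\{c\}$ is an interval, so this is again a cyclic interval. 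Hence $\alpha$ is a cyclic interval $2r$-coloring of $G$, giving $G\in\mathfrak{N}_c$ and $w_c(G)\leq 2r$.

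The one step to get right is the final verification at the degree-$(2r-1)$ vertices: the argument relies on the observation that a vertex whose degree equals the total number of available colors must see every color, so that deleting one incident edge leaves a color set whose complement is a single (hence consecutive) color. The remaining points — bipartiteness of $H$ and the degree bookkeeping — are routine.
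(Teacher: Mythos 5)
Your proof is correct and follows essentially the same route as the paper: double the graph, add joining edges to reach the hypotheses of Theorem~\ref{mytheorem1.2.1}, obtain a cyclic interval $2r$-coloring, and restrict to one copy. The only (inessential) difference is that the paper joins just \emph{one} vertex of degree $2r-1$ to its copy, whereas you join all odd-degree vertices; both variants satisfy the degree conditions of Theorem~\ref{mytheorem1.2.1}, and your explicit verification at the degree-$(2r-1)$ vertices---that a vertex of degree $2r$ in a proper $2r$-coloring sees all colors, so deleting one edge leaves a set with a one-color complement---is exactly the observation the paper leaves implicit.
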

\begin{proof} Let us
construct an auxiliary graph $G^{\star}$ as follows: we take two
isomorphic copies of the graph $G$ and join by an edge one vertex of
degree $2r-1$ with its copy. It is easy to see that $G^{\star}$ is a
bipartite graph with $\Delta(G^{\star})=2r$ ($r\geq 2$) and for
every $v\in V(G^{\star})$, $d_{G^{\star}}(v)\in
\{1,2,2r-2,2r-1,2r\}$. By Theorem \ref{mytheorem1.2.1}, $G^{\star}$
has a cyclic interval $2r$-coloring. Now we can consider the
restriction of this cyclic interval coloring to the edges of the
$G$. This coloring is a cyclic interval coloring of $G$ with
no more than $2r$ colors. Hence, $G\in \mathfrak{N}_{c}$ and
$w_{c}(G)\leq 2r$. $\square$
\end{proof}

Note that Theorems \ref{mytheorem1.2.1} and \ref{mytheorem1.2.2}
imply that every bipartite graph where all 
vertex degrees are in the set $\{1,2,4,5,6\}$ has a cyclic interval edge coloring.\\

Before we move on, we need the following result on bipartite graphs.

\begin{lemma}
\label{ourlemma} If $G$ is a bipartite graph with $\Delta(G)=4$ and
with no vertices of degree $3$, then $G$ has an interval
$4$-coloring $\alpha$ such that for each $v\in V(G)$ with
$d_{G}(v)=2$, either $S_{G}(v,\alpha)=[1,2]$ or
$S_{G}(v,\alpha)=[3,4]$.
\end{lemma}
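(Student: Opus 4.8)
The plan is to adapt the construction used in the proof of Theorem \ref{mytheorem1.2.1} to the case $r=2$, and then to verify that, in the absence of vertices of degree $3$, the cyclic interval $4$-coloring it produces is in fact a genuine interval coloring of the required special form. Since $\Delta(G)=4$ and $G$ has no vertex of degree $3$, every vertex of $G$ has degree $1$, $2$ or $4$. First I would remove the odd degrees: take two disjoint isomorphic copies $G_{1},G_{2}$ of $G$ and join every degree-$1$ vertex of $G_{1}$ to its copy in $G_{2}$ by an edge. Choosing the bipartition of the second copy to be the reverse of that of the first (that is, if $(X,Y)$ is the bipartition of $G$, take the parts $X_{1}\cup Y_{2}$ and $Y_{1}\cup X_{2}$) keeps the resulting graph $\tilde G$ bipartite, and now every vertex of $\tilde G$ has degree $2$ or $4$.

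Next, adding one loop at each vertex of degree $2$ yields a $4$-regular graph $\tilde G^{\star}$, to which Petersen's theorem applies: write $\tilde G^{\star}=F_{1}\cup F_{2}$ as an edge-disjoint union of two $2$-factors. Deleting the loops turns $F_{1},F_{2}$ into subgraphs $F_{1}',F_{2}'$ of $\tilde G$; since any loop of $\tilde G^{\star}$ already exhausts the degree of its vertex inside the factor containing it, each $F_{i}'$ is a disjoint union of even cycles (together with isolated vertices where a loop was removed), the cycles being even because $\tilde G$ is bipartite. I then colour the edges of $F_{1}'$ alternately with $1,2$ and those of $F_{2}'$ alternately with $3,4$; this is proper, since consecutive edges on an even cycle differ and the two factors use disjoint colour sets. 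Restricting the colouring to the edges of the copy $G_{1}\cong G$ gives the candidate colouring $\alpha$.

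The remaining step is to read off $S_{G}(v,\alpha)$ for each vertex type, and this is where the hypotheses are really used and where I expect the only genuine subtlety to lie. A degree-$4$ vertex carries no loop, so it has exactly two incident edges in each $F_{i}$, all four being original edges of $G$, and the alternating colourings give it $\{1,2\}$ from $F_{1}'$ and $\{3,4\}$ from $F_{2}'$, whence $S_{G}(v,\alpha)=[1,4]$. The key point is the degree-$2$ vertices: such a vertex $u$ carries a loop, which lies in exactly one factor, and as that factor is $2$-regular the loop accounts for all of $u$'s degree there; hence both genuine edges at $u$ lie in the other factor and receive the two colours of a single pair, so $S_{G}(u,\alpha)$ is $\{1,2\}$ or $\{3,4\}$, exactly as demanded. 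A degree-$1$ vertex contributes a single colour, a trivial interval. Thus every colour set is one of $[1,4]$, $\{1,2\}$, $\{3,4\}$ or a singleton; none of them wraps around, so $\alpha$ is an honest interval $4$-coloring with the stated property. The exclusion of degree-$3$ vertices is precisely what makes this last conclusion valid: a degree-$3$ vertex would split $2+1$ between the two colour pairs and could acquire a set such as $\{1,2,4\}$, which is only a \emph{cyclic} interval, so allowing it would break the argument.
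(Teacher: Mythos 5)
Your proof is correct and follows essentially the same route as the paper's: double the graph to eliminate pendant vertices, add a loop at each degree-$2$ vertex to obtain a $4$-regular graph, apply Petersen's theorem to split it into two $2$-factors, and color the resulting even cycles alternately with $\{1,2\}$ and $\{3,4\}$, the key point in both arguments being that a loop exhausts its vertex's degree within the single $2$-factor containing it, forcing both genuine edges at a degree-$2$ vertex into the other factor. The only cosmetic differences are that you carry the doubled graph through to the end and restrict, while the paper reduces without loss of generality to the case of degrees $2$ and $4$ at the outset, and that you spell out the bipartition of the doubled graph, which the paper leaves as "easy to see."
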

\begin{proof}
If $G$ has pendant vertices, then we can construct an auxiliary
graph $G^{\prime}$ as follows: we take two isomorphic copies of
$G$ and join by an edge every pendant vertex with its copy. It
is easy to see that $G^{\prime}$ is a bipartite graph with
$\Delta(G^{\prime})=4$ and with no vertices of degree $1$ or $3$.
So, without loss of generality, we may assume that the degree of
every vertex of $G$ is either $4$ or $2$.

Next, we construct an auxiliary graph $G^{\star}$ with loops as
follows: for each vertex $v\in V(G)$ with $d_{G}(v)=2$, we add a
loop at $v$. Clearly, $G^{\star}$ is a $4$-regular graph with loops.
By Petersen's theorem above, $G^{\star}$ can be decomposed into two
edge-disjoint $2$-factors $F_{1}$ and $F_{2}$. By removing all loops
from $2$-factors $F_{1}$ and $F_{2}$ of $G^{\star}$, we obtain that
$G$ can be decomposed into two edge-disjoint Eulerian subgraphs
$F^{\prime}_{1}$ and $F^{\prime}_{2}$. Since $G$ is bipartite, for
each $i$ ($1\leq i\leq 2$), $F^{\prime}_{i}$ is a collection of even
cycles in $G$, and we can color the edges of $F^{\prime}_{i}$
alternately with colors $2i-1$ and $2i$. Let $\alpha$ be the
resulting coloring of $G$. Clearly, $\alpha$ is a proper edge
coloring of $G$ with colors $1,2,3,4$, and for each vertex $u\in
V(G)$ with $d_{G}(u)=4$, $S_{G}(u,\alpha)=[1,4]$. Since for each
$v\in V(G)$ with $d_{G}(v)=2$, either $d_{F^{\prime}_{1}}(v)=0$ or
$d_{F^{\prime}_{2}}(v)=0$, we obtain that either
$S_{G}(v,\alpha)=[3,4]$ or $S_{G}(v,\alpha)=[1,2]$. $\square$
\end{proof}

It follows from Theorem \ref{mytheorem1.2.1} that every Eulerian bipartite
graph of maximum degree at most $6$ has a cyclic interval coloring. Next,
we prove that a stronger proposition is true.

\begin{theorem}
\label{mytheorem1.2.3} Every Eulerian bipartite graph $G$ with
maximum degree at most $8$ has a cyclic interval
$\Delta(G)$-coloring.
\end{theorem}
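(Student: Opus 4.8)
The plan is to reduce to the case $\Delta(G)=8$ and then assemble the coloring from two interval $4$-colorings. If $\Delta(G)=2$ the graph is a disjoint union of even cycles, coloured alternately with $1,2$; and if $\Delta(G)\in\{4,6\}$ then, writing $2r=\Delta(G)$, every (necessarily even) vertex degree lies in $\{1,2,2r-2,2r-1,2r\}$, so Theorem \ref{mytheorem1.2.1} applies directly. Hence I assume $\Delta(G)=8$, so that every vertex of $G$ has degree $2$, $4$, $6$ or $8$.

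The first step is to split $E(G)$ into two even subgraphs of small maximum degree. I form an $8$-regular graph $G^{\star}$ by adding $\tfrac{1}{2}(8-d_G(v))$ loops at each vertex $v$, which is possible since all degrees are even and at most $8$. By Petersen's theorem $G^{\star}$ decomposes into four edge-disjoint $2$-factors; I group these into two pairs and delete every loop, obtaining a partition $E(G)=E(H_1)\cup E(H_2)$ into two even (hence Eulerian) bipartite subgraphs with $\Delta(H_i)\le 4$. Tracking the loops records the degrees: a vertex of degree $8$ in $G$ has degree $4$ in each $H_i$; a vertex of degree $6$ has degrees $4$ and $2$ in some order; a vertex of degree $4$ has $H_i$-degrees in $\{0,2,4\}$ summing to $4$; and a vertex of degree $2$ has $H_i$-degrees in $\{0,2\}$. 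In particular each $H_i$ is bipartite with $\Delta(H_i)\le 4$ and has no vertex of degree $3$, so Lemma \ref{ourlemma} applies to it.

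I would then colour $H_1$ by Lemma \ref{ourlemma} with colours $1,2,3,4$ and $H_2$ by the same lemma shifted by $4$, i.e. with colours $5,6,7,8$, and let $\alpha$ be the union. Since the two palettes are disjoint, $\alpha$ is a proper $8$-edge-colouring, and I verify the cyclic interval condition vertex by vertex. A degree-$8$ vertex receives $\{1,\dots,8\}$; a degree-$6$ vertex splits $4/2$ and its colour set is always a cyclic interval (for instance $\{1,2,3,4,5,6\}$, or $\{1,2,3,4,7,8\}$ whose complement is $\{5,6\}$), regardless of the pair chosen by the lemma; a degree-$4$ vertex lying wholly in one part gets a full block $\{1,2,3,4\}$ or $\{5,6,7,8\}$; and a degree-$2$ vertex gets one of $\{1,2\},\{3,4\},\{5,6\},\{7,8\}$. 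The single troublesome case is a degree-$4$ vertex of $G$ that has degree exactly $2$ in both $H_1$ and $H_2$: by the lemma it receives one of $\{1,2\},\{3,4\}$ from $H_1$ and one of $\{5,6\},\{7,8\}$ from $H_2$, and the union is a cyclic interval precisely for the two aligned combinations $\{3,4\}\cup\{5,6\}$ and $\{1,2\}\cup\{7,8\}$, while the crossed combinations $\{1,2\}\cup\{5,6\}$ and $\{3,4\}\cup\{7,8\}$ fail.

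Eliminating these crossed degree-$4$ vertices is the main obstacle, and it is precisely where the freedom in Lemma \ref{ourlemma} must be exploited. Inside the proof of that lemma the pair assigned to a degree-$2$ vertex is dictated by which of the two even subgraphs (coloured $\{1,2\}$ or $\{3,4\}$) passes through it, and that is dictated by the chosen $2$-factorization; so I would aim to choose the $2$-factorizations of $H_1$ and $H_2$ in a coordinated fashion, or equivalently to correct a crossed vertex by a Kempe-type interchange that swaps its pair along the even cycle running through it. The delicate point is that such an interchange can be performed freely only on a cycle avoiding the degree-$4$ vertices of $H_i$ (which are exactly the high-degree vertices of $G$), so a single correction may disturb other vertices; controlling this is the technical heart. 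I expect the cleanest resolution to be to arrange the initial even decomposition so that each degree-$4$ vertex of $G$ has all four of its edges in one part $H_i$, whence its colour set is automatically a full block and the crossed cases never arise; proving that such a ``degree-$4$-monochromatic'' even decomposition exists (or, failing that, that the interchanges terminate without creating new crossed vertices) is the step I anticipate to be the crux of the argument.
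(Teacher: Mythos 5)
Your reduction to $\Delta(G)=8$ and the overall two-halves strategy are sound, and you have correctly isolated the failure mode: with the blocked palettes $\{1,2,3,4\}$ and $\{5,6,7,8\}$, a degree-$4$ vertex of $G$ having degree $2$ in both halves can receive the crossed unions $\{1,2\}\cup\{5,6\}$ or $\{3,4\}\cup\{7,8\}$, which are not cyclic intervals. But the proof stops there: neither of your two proposed repairs is carried out, and the one you favor -- forcing every degree-$4$ vertex to be monochromatic, i.e.\ a $4{+}0$ split -- cannot be extracted from your construction. In your Petersen decomposition the four real edges at a degree-$4$ vertex lie in exactly two of the four $2$-factors, and whether that vertex ends up $4{+}0$ or $2{+}2$ depends on which of the three pairings of $\{F_1,F_2,F_3,F_4\}$ you choose; different degree-$4$ vertices exclude different pairings, and three vertices excluding the three distinct pairings already rule out any global grouping. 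So the claimed ``degree-$4$-monochromatic'' decomposition is not available by this route (nor proved by any other), and the Kempe-interchange alternative is left equally unresolved. This is a genuine gap, not a routine verification.

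The paper resolves the crux by going in the opposite direction on both counts. First, it forces the \emph{uniform} $2{+}2$ split at every degree-$4$ vertex (and $4{+}2$ at degree $6$, $4{+}4$ at degree $8$): each degree-$6$ vertex is split into a degree-$2$ and a degree-$4$ vertex, components consisting only of degree-$2$ vertices are set aside, and maximal paths with internal degree-$2$ vertices are contracted, so that in the resulting multigraph every degree is divisible by $4$; hence it has an even number of edges, and coloring an Eulerian trail alternately Red/Blue yields a split in which every vertex is incident with equally many Red and Blue edges, which pulls back to $G$. Second -- and this is the key trick you are missing -- instead of blocked palettes it uses \emph{interleaved} ones: after applying Lemma \ref{ourlemma} to each half, colors $3,4$ of the Blue half are renamed $5,6$ and colors $1,2$ of the Red half are renamed $7,8$, so the halves use $\{1,2,5,6\}$ and $\{3,4,7,8\}$ respectively. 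At a $2{+}2$ vertex all four combinations $\{1,2\}\cup\{3,4\}$, $\{1,2\}\cup\{7,8\}$, $\{5,6\}\cup\{3,4\}$, $\{5,6\}\cup\{7,8\}$ are then cyclic intervals, so no coordination between the two applications of Lemma \ref{ourlemma} and no interchanges are needed at all. Note that interleaving and your monochromatic target are incompatible: a $4{+}0$ vertex would receive $\{1,2,5,6\}$, which is not a cyclic interval -- which is precisely why the balanced split, rather than the monochromatic one you aim for, is the right normalization.
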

\begin{proof}
    Let $G$ be an  Eulerian bipartite graph with 
		maximum degree at most $8$.
    If $G$ has maximum degree at most $6$, then the result follows
    from  Theorem  \ref{mytheorem1.2.1}, so we may assume that
    $G$ has maximum degree $8$.
    From $G$ we form a new graph $H$ by splitting each vertex
    $v$ of degree $6$ into two new vertices $v'$ and $v''$,
    where $v'$ has degree $2$ and $v''$ has degree $4$. The partitioning of 
		edges in this splitting  is arbitrary,
    other then ensuring that each vertex receives the correct degree.
    Observe that the resulting graph 
		$H$ is bipartite and the degree of
    each vertex in $H$ is either $2$, $4$ or $8$.

    Note that some components of $H$ might only contain vertices of degree $2$.
    Let $H'$ be the subgraph of $H$ containing every component of $H$
    where all vertices have degree $2$, and set $\hat H = H- V(H')$.

    From $\hat H$ we form a new graph $K$ 
		by replacing every maximal path,
    where all the internal vertices have degree $2$, by an edge
    joining the endpoints of the path; we call such a path in
    $\hat H$ {\em reducible}.
    In the resulting graph $K$
    every vertex has degree $4$ or $8$. Moreover, $K$ may contain loops
    (and multiple edges).

    Since every vertex degree in $K$ is divisible by $4$,
    $K$ has an even number of edges,
		and so $K$ has an Eulerian trail $T$
    with an even number of edges. (If $K$ contains loops, then we choose $T$
    in such a way that all loops at a particular vertex $v$ are traversed
    the first time that we visit $v$.)

    We color the edges of $T$ alternately with colors
    ``Blue'' and ``Red'' in such a way that every vertex is incident with
    equally many Red and Blue edges (where possible loops are counted twice).

    Since every reducible path in $\hat H$ corresponds to a single edge in $K$,
    the edge coloring of $K$ defines an edge coloring 
		of $\hat H$ in the following
    way:

    \begin{itemize}

        \item for every edge in $\hat H$ that is in $K$, we retain the color of
        this edge;

        \item for each reducible path $P$ in $\hat H$, color every edge in $P$
        with the color of the corresponding edge of $K$.

    \end{itemize}

    Next, we extend this edge coloring to $H'$ by coloring every edge in
    this graph by the color Red. Denote the obtained edge coloring
    of $H$ by $\varphi$.

    It is straightforward to see that the coloring $\varphi$ of $H$ satisfies
    the following:

    \begin{itemize}

        \item every vertex of degree $2$ in $H$ is incident with two
        edges of the same color;

        \item every vertex of degree $4$ in $H$ is incident with
        two Red and two Blue edges;

        \item every vertex of degree $8$ in $H$ is incident with
        four Red edges and four Blue edges.

    \end{itemize}
    Furthermore, since there is a one-to-one correspondence between edges of $G$
    and $H$, the coloring $\varphi$ induces an edge coloring $\varphi'$ of $G$
    such that

        \begin{itemize}

        \item every vertex of degree $2$ in $G$ is incident with two
        edges of the same color;

        \item every vertex of degree $4$ in $G$ is incident with
        two Red and two Blue edges;

        \item every vertex of degree $6$ in $H$ is incident with four
        Red edges and two Blue edges, or two Red edges and four Blue edges;

        \item every vertex of degree $8$ in $H$ is incident with
        four Red edges and four Blue edges.

    \end{itemize}

    The Blue edges in $G$ induces a subgraph $G_1$ of $G$, and the Red
    edges in $G$ induces a subgraph $G_2$ of $G$; so $G$ is the
    edge-disjoint union of the graphs $G_1$ and $G_2$.
    Moreover, for each $G_i$ ($i=1,2$):

    \begin{itemize}

        \item every vertex of degree $2$ in $G$ has either degree 
				$0$ or $2$ in $G_i$;

        \item every vertex of degree $4$ in $G$ has degree $2$ in $G_i$;

        \item every vertex of degree $6$ in $G$ has degree $4$ or $2$ in $G_i$

        \item every vertex of degree $8$ in $G$ has degree $4$ in $G_i$.

    \end{itemize}

    Hence each of the subgraphs $G_1$ and $G_2$ is a bipartite graph
    where every vertex has degree $4$ or $2$. By Lemma \ref{ourlemma},
    each $G_i$ has an interval $4$-coloring $f_i$ such that for every
    vertex $v$ of degree $2$ in $G_i$,
    $S_{G_{i}}(v,f_i)=[1,2]$ or $S_{G_{i}}(v,f_i)=[3,4]$.  From $f_1$
    we define a new edge coloring $g_1$ of $G_1$ by replacing colors
    $3$ and $4$ by colors $5$ and $6$, respectively; from $f_2$ we
    define a new edge coloring $g_2$ of $G_2$ by replacing colors
    $1$ and $2$ by colors $7$ and $8$, respectively.
    It is straightforward to verify that the colorings $g_1$ and $g_2$ together
    constitute a cyclic interval $8$-coloring of $G$.
 $\square$
\end{proof}
\medskip

We note that the above result is almost sharp, since there is an
Eulerian bipartite graph with six vertices and maximum degree $12$
without a cyclic interval coloring (see Fig. \ref{The graph H_{3,4}}
in section 5). It is thus an interesting open problem if the
condition of maximum degree at most $8$ can be replaced by $10$ in
the above theorem.

\begin{corollary}
\label{corollary1.2.5} A bipartite graph $G$ where all vertex
degrees are in the set $\{1,2,4,6,7,8\}$ has a cyclic interval
coloring.
\end{corollary}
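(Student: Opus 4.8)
The plan is to reduce to the Eulerian case already handled by Theorem~\ref{mytheorem1.2.3}, using the doubling construction employed in the proofs of Theorems~\ref{mytheorem1.2.1} and~\ref{mytheorem1.2.2}. Starting from a bipartite graph $G$ with parts $X,Y$ and all degrees in $\{1,2,4,6,7,8\}$, I would form an auxiliary graph $G^{\star}$ by taking two isomorphic copies $G_1$ and $G_2$ of $G$ and joining, by a single edge, every odd-degree vertex of $G_1$ (that is, every vertex of degree $1$ or $7$) to its copy in $G_2$. Taking the bipartition $A = X_1 \cup Y_2$ and $B = Y_1 \cup X_2$ shows, exactly as in the earlier proofs, that $G^{\star}$ is again bipartite; the handshake lemma guarantees the construction is well defined regardless of how the odd-degree vertices are distributed.

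The effect of the added edges is to raise each degree-$1$ vertex to degree $2$ and each degree-$7$ vertex to degree $8$, while leaving the degrees $2,4,6,8$ untouched. Hence every vertex of $G^{\star}$ has even degree lying in $\{2,4,6,8\}$, so $G^{\star}$ is an Eulerian bipartite graph with $\Delta(G^{\star}) \le 8$. By Theorem~\ref{mytheorem1.2.3}, $G^{\star}$ admits a cyclic interval $\Delta(G^{\star})$-coloring $\alpha$; write $t = \Delta(G^{\star}) \le 8$.

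The final step is to restrict $\alpha$ to the edges of the copy $G_1 \cong G$ and to verify that the restriction $\alpha'$ is a cyclic interval $t$-coloring of $G$. I would argue vertex by vertex. An even-degree vertex of $G$ received no added edge, so it retains its entire color set $S_{G^{\star}}(v,\alpha)$, which is already a cyclic interval. A degree-$1$ vertex of $G$ has degree $2$ in $G^{\star}$ and loses its one added edge under restriction, leaving a single colored edge, whose color set is trivially an interval. Finally, a degree-$7$ vertex $v$ of $G$ has degree $8$ in $G^{\star}$; in particular $\Delta(G^{\star}) = 8$, so $t = 8$, and since $\alpha$ is a proper $8$-coloring the degree-$8$ vertex $v$ must see all eight colors, i.e.\ $S_{G^{\star}}(v,\alpha) = [1,8]$. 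Deleting the color of its one added edge leaves seven colors whose complement is a single color, hence an interval, so the cyclic interval condition still holds at $v$. Therefore $\alpha'$ is a cyclic interval coloring of $G$, giving $G \in \mathfrak{N}_{c}$.

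The only delicate point is this last verification: restriction of a cyclic interval coloring is not a cyclic interval coloring in general, and it succeeds here precisely because every modified vertex either drops to a single remaining edge (the degree-$1$ case) or, in the degree-$7$ case, starts from the full color set $[1,8]$, whose complement after one deletion is again an interval. I expect this to be the only genuine obstacle; the bipartiteness of $G^{\star}$ and the degree bookkeeping that lets Theorem~\ref{mytheorem1.2.3} apply are routine. (Note that if $G$ has no odd-degree vertices the construction adds nothing and Theorem~\ref{mytheorem1.2.3} applies to $G$ directly.)
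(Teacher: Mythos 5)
Your proposal is correct and takes essentially the same approach as the paper: the identical doubling construction (two copies of $G$ with each odd-degree vertex joined to its copy) produces an Eulerian bipartite graph of maximum degree at most $8$, to which Theorem~\ref{mytheorem1.2.3} is applied, followed by restriction to $G$. The only difference is that you carefully verify the restriction step at degree-$1$ and degree-$7$ vertices, which the paper leaves to the reader as ``not difficult to see.''
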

\begin{proof} If $G$ is an Eulerian graph, the existence of a cyclic
interval coloring is evident by Theorem \ref{mytheorem1.2.3}.
Suppose that $G$ has some vertices with odd degrees. Define an
auxiliary graph $G^{\star}$ as follows: we take two isomorphic
copies $G_{1}$ and $G_{2}$ of the graph $G$ and join by an edge
every vertex with an odd vertex degree in $G_{1}$ with its copy in
$G_{2}$. Clearly, $G^{\star}$ is a bipartite Eulerian graph with
maximum degree at most 8. Therefore, by Theorem
\ref{mytheorem1.2.3}, $G^{\star}$ has a cyclic interval coloring. It
is not difficult to see that the restriction of this coloring to the
edges of $G$ is a cyclic interval coloring of $G$. $\square$
\end{proof}

Let us now consider $(a,b)$-biregular graphs.
The following result is an evident corollary of Theorem
\ref{mytheorem1.2.1}.

\begin{corollary}
\label{mycorollary1.2.6} If $G$ is a $(2r-2,2r)$-biregular ($r\geq
2$) graph, then $G\in \mathfrak{N}_{c}$ and $w_{c}(G)=2r$.
\end{corollary}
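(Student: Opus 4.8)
The plan is to obtain both the membership $G \in \mathfrak{N}_{c}$ and the exact value $w_c(G) = 2r$ by combining a direct application of Theorem \ref{mytheorem1.2.1} for the upper bound with an elementary lower bound coming from the chromatic index.

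For the upper bound $w_c(G) \leq 2r$, I would simply check that a $(2r-2,2r)$-biregular graph falls within the hypothesis of Theorem \ref{mytheorem1.2.1}. In such a graph the two parts consist of vertices of degree $2r-2$ and of degree $2r$ respectively, so $\Delta(G) = 2r$ and every vertex degree lies in $\{2r-2, 2r\}$, which is a subset of $\{1,2,2r-2,2r-1,2r\}$ (in the boundary case $r=2$ this set is $\{2,4\}$, and $\{2r-2,2r\} = \{2,4\}$ is still contained). Hence Theorem \ref{mytheorem1.2.1} applies verbatim and produces a cyclic interval $2r$-coloring of $G$. In particular $G \in \mathfrak{N}_{c}$ and $w_c(G) \leq 2r$.

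For the matching lower bound $w_c(G) \geq 2r$, I would use that any cyclic interval $t$-coloring is, by definition, a proper $t$-edge coloring, so necessarily $t \geq \chi'(G)$. Since $G$ is bipartite, K\H{o}nig's edge coloring theorem (quoted in the introduction) gives $\chi'(G) = \Delta(G) = 2r$; equivalently, any fixed vertex of degree $2r$ forces at least $2r$ distinct colors on its incident edges. Therefore every cyclic interval coloring of $G$ uses at least $2r$ colors, i.e.\ $w_c(G) \geq 2r$. Combining the two inequalities yields $w_c(G) = 2r$.

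As the corollary's phrasing (``an evident corollary'') already suggests, I do not anticipate any genuine obstacle here: the entire content is a specialization of an already-proved theorem together with the standard chromatic-index bound. The only points warranting a line of care are verifying that the biregular degrees $\{2r-2,2r\}$ indeed fit the admissible degree set of Theorem \ref{mytheorem1.2.1} (immediate, including the $r=2$ case) and confirming that the lower bound invokes $\chi'(G) = \Delta(G)$ correctly for bipartite graphs.
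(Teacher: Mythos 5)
Your proof is correct and matches the paper's intended argument exactly: the paper states this as an ``evident corollary'' of Theorem \ref{mytheorem1.2.1}, relying precisely on your observation that the degrees $\{2r-2,2r\}$ fit the theorem's admissible set with $\Delta(G)=2r$, while the lower bound $w_c(G)\geq 2r$ is the standard consequence of $\chi'(G)=\Delta(G)$ for bipartite graphs. Nothing is missing; your care with the $r=2$ boundary case is a nice touch but the argument is the same.
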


Our next result establishes a connection between the existence of
cyclic interval colorings for $(a,b)$-biregular and
$(a,b-1)$-biregular graphs.

\begin{theorem}
\label{mytheorem1.2.4} If every $(a,b)$-biregular ($a<b$) graph has
a cyclic interval $b$-coloring and $\gcd(a,b-1)=1$, then every
$(a,b-1)$-biregular graph has a cyclic interval $b$-coloring.
\end{theorem}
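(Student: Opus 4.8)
The plan is to augment an arbitrary $(a,b-1)$-biregular graph $H$ into an $(a,b)$-biregular graph $G$ by raising the degree of every vertex of degree $b-1$ up to $b$, then apply the hypothesis to color $G$ cyclically, and finally recover a coloring of $H$ by deleting the newly added edges. Write $X$ for the part of $H$ whose vertices have degree $a$ and $Y$ for the part whose vertices have degree $b-1$. To lift each vertex of $Y$ to degree $b$ I would attach one new edge at every vertex of $Y$; since each such edge joins a vertex of $Y$ to a vertex on the opposite side, and that opposite side must consist of degree-$a$ vertices if $G$ is to be $(a,b)$-biregular, the natural move is to introduce fresh degree-$a$ vertices, each adjacent to $a$ vertices of $Y$.

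The crucial point --- and the only place where the hypothesis $\gcd(a,b-1)=1$ is used --- is that this attachment is actually possible. Counting the edges of $H$ gives $a|X| = (b-1)|Y|$, so $a \mid (b-1)|Y|$, and since $\gcd(a,b-1)=1$ this forces $a \mid |Y|$. Hence I can partition $Y$ into $|Y|/a$ blocks of size exactly $a$ and add, for each block, a single new vertex joined to the $a$ vertices of that block. Each new vertex then has degree $a$, every vertex of $Y$ gains exactly one edge and thus attains degree $b$, and no edge is added at any vertex of $X$. The resulting graph $G$ is bipartite, with one side (the old $X$ together with the new vertices) entirely of degree $a$ and the other side ($Y$) entirely of degree $b$; that is, $G$ is $(a,b)$-biregular.

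By the hypothesis, $G$ admits a cyclic interval $b$-coloring $\alpha$, and I would then restrict $\alpha$ to $E(H)$. For a vertex $v\in X$ the incident edges in $G$ coincide with those in $H$, so its color set is unchanged and remains a cyclic interval. For a vertex $v\in Y$, its degree in $G$ is $b$, so in the proper coloring $\alpha$ it meets all $b$ colors; deleting its single added edge removes exactly one color $c_v$, leaving the color set $[1,b]\setminus\{c_v\}$, whose complement $\{c_v\}$ is an interval of length one and hence admissible in a cyclic interval coloring. Thus the restriction is a cyclic interval $b$-coloring of $H$.

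The argument is short, and I do not expect a serious computational obstacle. The one thing that genuinely has to be verified is the divisibility $a\mid |Y|$, which is precisely what the coprimality assumption delivers and which makes the uniform block decomposition of $Y$ --- and therefore the degree-$a$ augmentation --- possible. Everything else (that $G$ is biregular, that the restriction stays proper, and that each vertex of $Y$ loses exactly one color) follows immediately from the construction.
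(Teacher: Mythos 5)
Your proof is correct and takes essentially the same route as the paper's: both use $\gcd(a,b-1)=1$ together with $a|X|=(b-1)|Y|$ to conclude $a \mid |Y|$, partition $Y$ into blocks of size $a$, attach one new degree-$a$ vertex to each block to produce an $(a,b)$-biregular graph, and then restrict its cyclic interval $b$-coloring to the original graph. Your explicit check that each vertex of $Y$ loses exactly one color (so the complement of its color set is a singleton, hence an interval) merely spells out what the paper dismisses as ``not difficult to see.''
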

\begin{proof} Let $G$ be an $(a,b-1)$-biregular ($a<b$) bipartite
graph with bipartition $(X,Y)$. Clearly, $a\vert X\vert=(b-1)\vert
Y\vert$. Since $\gcd(a,b-1)=1$, we have $\vert Y\vert=a k$ for
some integer $k$. Let $Y=\{y_{1},\ldots,y_{a k}\}$. Now we
define an auxiliary graph $G^{\prime}$ as follows:
\begin{center}
$V\left(G^{\prime}\right)=X\cup X' \cup Y$ and
$E\left(G^{\prime}\right)=E(G)\cup E^{\prime}$,\\
where
$X^{\prime}=\left\{x^{\prime}_{1},\ldots,x^{\prime}_{k}\right\}$ and
$E^{\prime}=\left\{x^{\prime}_{i}y_{a(i-1)+1},\ldots,
x^{\prime}_{i}y_{a i}\colon\, 1\leq i\leq k\right\}$.
\end{center}
Clearly, $G^{\prime}$ is an $(a,b)$-biregular bipartite graph with
bipartition $(X \cup X',Y)$, and since $G^{\prime}$ is
$(a,b)$-biregular, $G^{\prime}$ has a cyclic interval $b$-coloring.
It is not difficult to see that the restriction of this edge
coloring to the edges of $G$ induces a cyclic interval $b$-coloring.
$\square$
\end{proof}

Since all $(3,6)$-biregular and $(4,8)$-biregular graphs have cyclic
interval $6$- and $8$-colorings, respectively \cite{CarlJToft}, we
deduce the following two consequences from Theorem
\ref{mytheorem1.2.4}. The first one was first obtained in
\cite{CasPetToft} (using essentially the same proof).

\begin{corollary} \cite{CasPetToft}
\label{mycorollary1.2.7} If $G$ is a $(3,5)$-biregular graph, then
$G\in \mathfrak{N}_{c}$ and $w_{c}(G)\leq 6$.
\end{corollary}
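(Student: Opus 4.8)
The plan is to obtain this statement as an immediate application of Theorem \ref{mytheorem1.2.4}. I would invoke that theorem with the parameters $a=3$ and $b=6$, so that $b-1=5$ and the target class is exactly the $(3,5)$-biregular graphs. Two hypotheses must be checked. First, every $(3,6)$-biregular graph must admit a cyclic interval $6$-coloring; this is precisely the result of Casselgren and Toft \cite{CarlJToft} quoted just above. Second, the coprimality condition $\gcd(a,b-1)=\gcd(3,5)=1$ must hold, which is immediate.

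With both hypotheses verified, Theorem \ref{mytheorem1.2.4} yields directly that every $(3,5)$-biregular graph has a cyclic interval $6$-coloring. In particular the given graph $G$ lies in $\mathfrak{N}_{c}$ and satisfies $w_{c}(G)\le 6$, as claimed.

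There is essentially no obstacle here: all of the substantive work has already been carried out, partly in \cite{CarlJToft} and partly in the proof of Theorem \ref{mytheorem1.2.4}. For orientation I would recall what the latter does in this instance. Given a $(3,5)$-biregular graph $G$ with bipartition $(X,Y)$, the edge count forces $3\vert X\vert=5\vert Y\vert$, and coprimality gives $\vert Y\vert=3k$ for some integer $k$. One then augments $G$ by $k$ new vertices on the $X$-side, each adjacent to a distinct block of three vertices of $Y$, so that every vertex of $Y$ is raised from degree $5$ to degree $6$ while every $X$-vertex (old or new) has degree $3$. The augmented graph $G'$ is $(3,6)$-biregular, hence cyclically $6$-colorable; restricting such a coloring to $E(G)$ deletes exactly one color at each $y\in Y$, so each $y$ now sees five of the six colors, whose complement is a single color and therefore an interval, while the $X$-vertices of $G$ keep their colors intact. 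Thus the restriction remains a cyclic interval $6$-coloring. The only point requiring mild care is this last observation---that removing an edge at a full (degree-$6$) vertex preserves the cyclic interval property---but it is self-evident once stated, so I expect the proof to be a one-line invocation of Theorem \ref{mytheorem1.2.4}.
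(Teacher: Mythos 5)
Your proposal is correct and takes essentially the same route as the paper, which likewise deduces Corollary \ref{mycorollary1.2.7} by applying Theorem \ref{mytheorem1.2.4} with $a=3$ and $b=6$, citing the result of \cite{CarlJToft} that every $(3,6)$-biregular graph has a cyclic interval $6$-coloring and noting $\gcd(3,5)=1$. Your expanded recap of the augmentation construction and of why the restriction to $E(G)$ stays cyclically interval is also faithful to the proof of Theorem \ref{mytheorem1.2.4}.
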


\begin{corollary}
\label{mycorollary1.2.8} If $G$ is a $(4,7)$-biregular graph, then
$G\in \mathfrak{N}_{c}$ and $w_{c}(G)\leq 8$.
\end{corollary}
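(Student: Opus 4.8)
The plan is to deduce this directly from Theorem~\ref{mytheorem1.2.4}, which reduces the existence of cyclic interval $b$-colorings for $(a,b-1)$-biregular graphs to the corresponding question for $(a,b)$-biregular graphs, under the arithmetic side condition $\gcd(a,b-1)=1$. The only genuine decision is the choice of the parameters $a$ and $b$: to make $(4,7)$-biregular graphs appear as the $(a,b-1)$-biregular family, I would set $a=4$ and $b=8$, so that $b-1=7$.

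With these parameters fixed, two hypotheses of Theorem~\ref{mytheorem1.2.4} must be verified. First, every $(a,b)$-biregular graph, that is, every $(4,8)$-biregular graph, must admit a cyclic interval $b$-coloring, i.e.\ a cyclic interval $8$-coloring; this is precisely the result of Casselgren and Toft \cite{CarlJToft} quoted immediately before the corollary. Second, we need $\gcd(a,b-1)=\gcd(4,7)=1$, which holds since $4$ and $7$ are coprime.

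Both hypotheses being satisfied, Theorem~\ref{mytheorem1.2.4} immediately yields that every $(4,7)$-biregular graph $G$ has a cyclic interval $8$-coloring. In particular $G\in\mathfrak{N}_{c}$ and $w_{c}(G)\leq 8$, which is exactly the assertion.

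Since the argument is a one-line instantiation of the preceding theorem, there is essentially no obstacle internal to it; all the difficulty is encapsulated in the two imported ingredients, namely the $(4,8)$-biregular theorem of \cite{CarlJToft} and the coprimality $\gcd(4,7)=1$, together with the construction inside Theorem~\ref{mytheorem1.2.4} that, given a $(4,7)$-biregular graph $G$ with bipartition $(X,Y)$, attaches new degree-$4$ vertices each joined to a block of $4$ vertices of $Y$, thereby raising every vertex of $Y$ from degree $7$ to degree $8$ and producing an auxiliary $(4,8)$-biregular graph whose cyclic interval $8$-coloring restricts back to $G$.
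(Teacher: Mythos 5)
Your proposal is correct and matches the paper's own proof exactly: the paper likewise instantiates Theorem~\ref{mytheorem1.2.4} with $a=4$, $b=8$, citing \cite{CarlJToft} for the cyclic interval $8$-colorability of $(4,8)$-biregular graphs and using $\gcd(4,7)=1$. Nothing is missing.
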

\bigskip


\section{Cyclic interval colorings of outerplanar graphs}\

Let us now consider outerplanar graphs. We conjecture that
all connected outerplanar graphs have cyclic interval colorings, 
and we prove
this conjecture for simple graphs with maximum degree at most 4.
For the proof, we shall use
the fact that every simple $2$-connnected
outerplanar graph with maximum degree $3$ has an interval coloring with
$3$ or $4$ colors \cite{PetrosyanPlanar}.

\begin{theorem}
\label{mytheorem2.1}
    If $G$ is a simple connected outerplanar graph with
    maximum degree $\Delta(G)\leq 4$,
    then $G$ has a cyclic interval coloring.
\end{theorem}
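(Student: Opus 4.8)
The plan is to split into cases according to the maximum degree and bipartiteness, reducing everything to the essential case of a non-bipartite graph of maximum degree exactly $4$. First, if $\Delta(G)\le 3$, then by Nadolski's theorem \cite{Nadol} the connected graph $G$ already has a cyclic interval coloring, so we may assume $\Delta(G)=4$. If in addition $G$ is bipartite, then Corollary \ref{corollary1.2.3} (or the theorem of de Werra and Solot \cite{deWerraSolot}) yields a cyclic interval $4$-coloring directly. Thus the remaining task is to treat a non-bipartite simple connected outerplanar graph $G$ with $\Delta(G)=4$.

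I would next pass to the block--cut-vertex tree of $G$. Each block is a simple outerplanar graph of maximum degree at most $4$, and a $2$-connected outerplanar block carries the convenient structure of a Hamiltonian boundary cycle together with noncrossing chords, so that a vertex has degree $4$ precisely when it is incident with two chords. The idea is to color each block over a common palette and then glue: if all blocks admit cyclic interval colorings using the same number $t\ge 4$ of colors, then, rooting the block tree and processing it from the root, one can cyclically shift (and if necessary reflect) the coloring of each block so that, at the cut vertex joining it to its parent, the colors it contributes form a cyclic interval placed immediately next to the colors already positioned there. Since a cyclic shift of a cyclic interval coloring is again one, and since every cut vertex has degree at most $4\le t$, the shifted colorings fit together into a cyclic interval $t$-coloring of $G$.

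For the $2$-connected blocks themselves I would try to reduce the degree-$4$ vertices to degree $3$ by splitting each such vertex, along its planar rotation, into two degree-$3$ vertices joined by a new edge; this preserves simplicity and outerplanarity, after which the cited result \cite{PetrosyanPlanar} supplies an interval $3$- or $4$-coloring of the reduced block, which one then transports back across the contractions. \textbf{The main obstacle} is twofold and is concentrated at triangles and at the recontracted degree-$4$ vertices. A triangle occurring as a block forces the local use of three colors (it has a cyclic interval $3$-coloring but none with four or more colors), whereas every degree-$4$ vertex forces at least four colors; hence no single global palette succeeds blockwise, and triangles—and, more generally, short odd faces carrying degree-$2$ vertices—cannot be colored in isolation but must be coordinated with the neighboring structure, exploiting the freedom to let a cut vertex of higher degree ``bridge'' the gap left in the triangle's color set. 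Likewise, when a split degree-$4$ vertex is recontracted one must guarantee both that its four incident edges receive four distinct colors (properness is not automatic, since the two halves are nonadjacent before contraction) and that these colors form a full cyclic interval. Resolving these constraints calls for a case analysis on the local configuration of chords, triangles and cut vertices, using the tree structure of the weak dual of each block together with the rotational and reflective symmetry of cyclic interval colorings; this casework is where essentially all of the work lies.
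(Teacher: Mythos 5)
Your reduction steps (Nadolski for $\Delta\leq 3$, bipartite case aside) and your gluing scheme along the block--cut-vertex tree match the paper's overall architecture, and the gluing idea itself is sound: cyclic shifts of a cyclic interval $t$-coloring are again cyclic interval colorings, and at a cut vertex of total degree at most $4<5$ the intervals contributed by the blocks can be rotated into consecutive position. But the proof has a genuine gap exactly where you place ``essentially all of the work'': you never actually produce cyclic interval colorings of the $2$-connected blocks of maximum degree $4$, nor do you resolve the triangle/palette conflict you correctly identify. Your proposed route for the blocks --- split each degree-$4$ vertex into two degree-$3$ vertices joined by a new edge, apply the cited result of \cite{PetrosyanPlanar}, then recontract --- fails as stated: after deleting the splitting edge and merging, the four original edges at the recombined vertex need not receive four distinct colors (the two halves are nonadjacent in the split graph, so properness is not inherited), and even when they do, nothing forces those four colors to form a cyclic interval in whatever palette is used. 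You name these obstacles yourself but defer them to an ``analysis on the local configuration of chords, triangles and cut vertices'' that is never carried out; a proof sketch that ends by listing its own unresolved obstructions is not a proof.

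For comparison, the paper closes both holes with two concrete devices. First (Claim \ref{cl:1}), a $2$-connected block of maximum degree $4$ is colored directly, without any splitting: take the Hamiltonian cycle $C$ of the outerplanar block, color the chords $G-E(C)$ (a graph of maximum degree $2$) alternately with $1$ and $3$ (using $2$ sparingly on odd components), and color $C$ with $4$ and $5$; the point is that $\{1,3,4,5\}$ is a cyclic interval modulo $5$, so degree-$4$ vertices come out right automatically, and the real casework is only the parity problem when $|V(C)|$ is odd. Second (Claim \ref{cl:2}), for blocks of maximum degree at most $3$ the paper does not ask for a coloring in a common palette $t\geq 4$ --- which, as you observe, a triangle block cannot supply, since $K_3$ admits a cyclic interval $3$-coloring but none with more colors --- but instead for either an interval coloring with at most $4$ colors or a cyclic interval $3$-coloring whose \emph{unique} non-interval vertex can be \emph{prescribed} to be the cut vertex; then shifting that $3$-coloring by $+2$ against the already-placed pair $\{1,2\}$ at the cut vertex makes the glued coloring cyclically interval with $5$ colors. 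This freedom to choose where the single defect sits is the missing coordination mechanism in your sketch, and without it (or the direct $5$-coloring of Claim \ref{cl:1}) your induction over the block tree cannot get started on blocks containing triangles or degree-$4$ vertices.
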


\begin{proof}
    If $\Delta(G) \leq 3$, then $G$ has a cyclic interval coloring by the
    result of Nadolski \cite{Nadol} so it suffices to prove the
    theorem when $\Delta(G) =4$.
		
		We shall prove the theorem using the following two claims.
		
		\begin{claim}
		\label{cl:1}
			Every simple $2$-connected outerplanar  graph of maximum degree
			$4$ has a cyclic interval $5$-coloring.		
		\end{claim}
		
		\begin{proof}
    Let $G$ be a simple $2$-connected  outerplanar  graph. Then $G$ has a
    Hamiltonian cycle $C$, implying that
		$G-E(C)$ is a simple graph with maximum degree $2$.
		We define a proper edge coloring $\alpha$ of 
		$G-E(C)$ as follows: for each component
		which is a path or an even cycle we color the edges of it by colors
		$1$ and $3$ alternately; for each component which is an odd cycle
		we color one of the edges of it by color $2$, and the rest of the
		edges in the component by colors $1$ and $3$ alternately.
		%

    Suppose first that $|V(G)|$ is even; then we may properly
    color the edges of $C$ using colors $4$ and $5$. This edge coloring
    along with the coloring $\alpha$ of $G-E(C)$ constitute a
    cyclic interval coloring of $G$.

    Suppose now that $|V(G)|$ is odd. We consider some different cases.

    \bigskip
    \noindent
    {\bf Case 1}. $C$ contains two consecutive vertices both of which
    have degree $4$ in $G$:

    Let $u$ and $v$ be two vertices of degree $4$ in $G$, which are
    consecutive on $C$.
    Suppose first that $u$ and $v$ lie in different components
    of $G-E(C)$. Then we may without loss of generality assume
    that both $u$ and $v$ are incident with two edges colored $1$
    and $3$ under $\alpha$, respectively (possibly by shifting colors
    along cycles or paths in $G-E(C)$). We define an edge coloring $\beta$
    of $C$ by coloring $uv$ with color $2$ and coloring the rest of
    the edges of $C$ by colors $4$ and $5$ alternately. It is
    straightforward that $\alpha$ and $\beta$ together constitute a cyclic
    interval coloring of $G$.

    Suppose now that $u$ and $v$ belong to the same component $S$ of
    $G-E(C)$. Since $G$ is an outerplanar graph, this implies that
    $S$ is a path and thus its edges are colored
    by colors $1$ and $3$ under $\alpha$; we obtain a cyclic interval
    coloring by coloring the edges of $C$ as in the preceding case.

    \bigskip
    \noindent
    {\bf Case 2}. $C$ does not contain two consecutive vertices
    both of which have degree $4$ in $G$:

    Suppose first that there are vertices $u$ and $v$ of degree
    $3$ and $4$ in $G$, respectively, which are consecutive on $C$.
    If $u$ and $v$ lie in different components
    of $G- E(C)$, then we may assume that $v$ is incident with two edges
    colored $1$ and $3$, respectively, under $\alpha$. Moreover,
        $u$ is incident
    with an edge colored $1$ or $3$ under $\alpha$. We may thus proceed
    by coloring $uv$ with color $2$ and then coloring the rest
		of $C$ by colors $4$ and $5$ alternately, starting
		with color $5$ at $u$ if $u$ is incident with an edge colored $1$,
		and starting with color $4$ at $u$ otherwise.

    If $u$ and $v$ lie in the same component $S$ of $G-E(C)$, then
    $S$ is a path, and thus
    the edges of $S$ are colored alternately with colors $1$ and $3$ under
    $\alpha$. Thus, we may proceed as in the preceding paragraph.

    Suppose now that there are no vertices $u$ and $v$ of degree
    $3$ and $4$ in $G$, respectively, 
		which are consecutive on $C$.
    Then, if $u$ has degree $4$, then any neighbor of $u$ on $C$
    has degree $2$.
    Suppose first that $G-E(C)$ contains at least one cycle
		or a path of length at least $3$.
		Let $x$ be a vertex of degree $2$ on this cycle or path,
		let $y$ be a neighbor of $x$ on $C$,
		and let $z$ be a neighbor of $x$ in $G-E(C)$
		which has degree $2$ in $G-E(C)$.
    We construct a proper edge coloring 
		$\alpha'$ of $G-E(C)$ from $\alpha$ by
    recoloring the edges
    of the component containing $x$ by colors $1,2,3$
    in such a way that $xz$ is colored $2$, the
    other edge incident with $x$ is colored $1$, and all other edges
		of this component is colored by $1$ and $3$ alternately.
		By coloring the edge $xy$ with color $3$,
    and the rest of the edges of $C$ by colors $4$ and $5$, alternately,
        and starting with color $4$ at $y$,
    we clearly obtain a cyclic interval coloring of $G$.

    Suppose now that $G-E(C)$ is a vertex-disjoint union of paths
		all of which have length at most $2$.
		Since $G$ is outerplanar and has maximum
		degree $4$ in $G$, there is a path $P=xuv$ in 
		$G-E(C)$ of length $2$. Denote by $Q$ the path in $C$
		with origin $u$ and terminus $v$ that contains $x$ as an inner vertex.
		We color $Q$ by colors $2$ and $3$ alternately, and starting with
		color $2$ at $u$; we color all other edges of $C$ with colors
		$5$ and $4$ alternately and starting with color $5$ at $u$.
		%
		A component $S$ of $G-E(C)$ we color by $1$ and $3$ 
		alternately if
		both endpoints of $S$ are in $V(C) \setminus V(Q)$;
		if the endpoints of $S$ are in $Q$ we color it by $1$ and $4$
		alternately,
		except for the path $xuv$, where
		we color $xu$ with $1$ and $uv$
		with $3$ ($4$) if $Q$ has odd (even) length. 
		The resulting coloring is a cyclic interval $5$-coloring.
		$\square$
 \end{proof}

	We shall also need the following claim:
	
	\begin{claim}
	\label{cl:2}
		Every simple $2$-connected outerplanar graph $G$
		of maximum degree at most $3$
		has a 
		cyclic 
		interval $3$-coloring $\alpha$   
		with the property that
		there is at most one vertex $v \in V(G)$ 
		such that $S_G(v,\alpha)$ is not
		an interval, or an interval coloring with at most $4$ colors. 
		Moreover, if $G$ is not interval colorable
		using at most $4$ colors, then
		for any vertex $v \in V(G)$ of degree $2$, we can choose
		the coloring $\alpha$ so that $v$ is the unique vertex
		with the property that $S_G(v,\alpha)$ is not interval.
	\end{claim}
	\begin{proof}
		If $G$ has maximum degree at most $2$, then trivially $G$
		has a cyclic interval coloring with the required property 
		or an interval coloring (and thus
		also a cyclic interval coloring).
		
		If $G$ has maximum degree $3$, then the claim follows from the
		result of Petrosyan \cite{PetrosyanPlanar} that every simple
		$2$-connected outerplanar graph of maximum degree $3$ 
		has an interval coloring with at most $4$ colors.
		%
		%
	$\square$
 \end{proof}

		We now finish the proof of the theorem by proving 
		that every simple outerplanar graph with maximum degree $4$
		has a cyclic interval coloring. The proof is by induction on
		the number of blocks of $G$. Since any tree is interval colorable, 
		we may assue that there is some cycle of $G$.
	
		If $G$ has only one block, then the result follows from Claims
		\ref{cl:1} and \ref{cl:2}. 
		Suppose now that $G$ has two blocks $F_1$ and $F_2$, and that
		$v$ is the common vertex of these blocks.
		We shall prove that $G$ has a cyclic interval $5$-coloring.
		
		Suppose first that one of the blocks, say $F_2$, 
		consist of a single edge. Since any interval coloring with at most
		$5$ colors is also a cyclic interval $5$-coloring, it follows from
		Claims \ref{cl:1} and \ref{cl:2} that
		$F_1$ either has a 
		\begin{itemize}
		 
		\item cyclic interval $3$-coloring $\alpha$,
		with the property that $v$
		is the only vertex $u$ such that $S(u,\alpha)$ 
		may not be an interval,
			
		\item a cyclic interval $5$-coloring.
		
		\end{itemize}
		It is straightforward to verify that in both cases
		we may color the edge of $F_2$ to obtain a cyclic
		interval $5$-coloring of $G$. 
		
		Suppose now that both $F_1$ and $F_2$ have maximum degree
		at least $2$. 
		Since $G$ has maximum degree $4$, this 
		implies that two edges of
		$F_1$, and two edges of $F_2$, are incident with $v$.
		By Claims \ref{cl:1} and \ref{cl:2}, for $i \in \{1,2\}$,
		$F_i$ either has a 
		\begin{itemize}
		 
		\item cyclic interval $3$-coloring $\alpha_i$,
		with the property that $v$
		is the only vertex $u$ such that $S(u,\alpha_i)$ may not be an interval,
		or 
		
		\item a cyclic interval $5$-coloring $\alpha_i$.
		
		\end{itemize}

		If both $F_1$ and $F_2$ have cyclic interval
		$3$-colorings, $\alpha_1$ and $\alpha_2$, respectively,
		then we may assume that $S(v,\alpha_1) = S(v,\alpha_2) = \{1,3\}$,
		and we define a new
		coloring $\alpha'_2$ of $F_2$ by setting $\alpha_2'(e) = \alpha_2(e)+1$
		for any edge $e$ of $F_2$.
		Taking $\alpha_1$ and $\alpha_2'$ together we obtain an interval coloring
		of the graph, and hence a cyclic interval $5$-coloring.
		
		
		Suppose now that one of $F_1$ and $F_2$ has a
		cyclic interval $5$-coloring. Assume e.g. that
		$\alpha_1$ is a cyclic interval
		$5$-coloring of $F_1$. Then
		we may rotate the colors of $\alpha_1$ modulo $5$ 
		to obtain a coloring
		$\alpha'_1$ so that
		$S(v,\alpha'_1) \cap S(v,\alpha_2) = \emptyset$, and thus 
		$\alpha'_1$ and $\alpha_2$ taken together form a cyclic interval
		$5$-coloring.
		
		\bigskip

		Now assume that $G$ has several blocks $F_1,F_2,\dots, F_n$
		and that we have constructed a cyclic interval $5$-coloring $\alpha$
		of the connected subgraph $H_r$ of $G$
		consisting of blocks $F_1, F_2,\dots, F_r$.
		Suppose that $F_{r+1}$ has exactly one vertex in common with $H_r$.
		We complete the induction step by proving that 
		there is a cyclic interval $5$-coloring of the union $H_{r+1}$
		of $H_r$ and $F_{r+1}$.
		
		If $F_{r+1}$ consist of a single edge, then the result is
		trivial. Suppose now that $F_{r+1}$ has maximum degree at
		least $2$, and let $v$ be the common vertex of $H_r$ and $F_{r+1}$.
		It follows that $2$ edges of $H_r$, and two edges of $F_{r+1}$,
		are incident with $v$. Moreover, 
		by rotating the colors in $\alpha$ modulo $5$,
		we may assume that $S_{H_r}(v,\alpha) =\{1,2\}$.
		
		By Claims \ref{cl:1} and \ref{cl:2}, $F_{r+1}$
		has a cyclic interval $3$-coloring $\beta$
		satisfying that $v$ is the only vertex $u$ with the property
		that $S(u,\beta)$ may not be an interval,
		or a cyclic interval $5$-coloring. If the former holds, then by 
		defining the coloring $\beta'$ by setting
		$\beta'(e) = \beta(e) +2$, we obtain a cyclic interval $5$-coloring
		of $H_{r+1}$ by taking $\beta'$ and $\alpha$ together. 
		%
		If $F_{r+1}$ has a cyclic interval $5$-coloring $\beta$, 
		then by rotating the colors of
		$\beta$ we obtain a coloring $\beta'$ such that 
		$S(v,\beta') \cap S(v, \alpha) =\emptyset$, and thus $\alpha$ and
		$\beta'$ together form a cyclic interval $5$-coloring of $H_{r+1}$.
		%
	%
	$\square$
 \end{proof}

\bigskip

\section{Cyclic interval colorings of complete multipartite graphs}\

A graph $G$ is called a complete $r$-partite ($r\geq 2$) graph if
its vertices can be partitioned into $r$ nonempty independent sets
$V_1,\ldots,V_r$ such that each vertex in $V_i$ is adjacent to all
the other vertices in $V_j$ for $1\leq i<j\leq r$. Let
$K_{n_{1},n_{2},\ldots,n_{r}}$ denote a complete $r$-partite graph
with independent sets $V_1,V_2,\ldots,V_r$ of sizes
$n_{1},n_{2},\ldots,n_{r}$. We set $n=\sum_{i=1}^{r}n_{i}$.

In \cite{PetMkhit}, it was conjectured that all complete
multipartite graphs are cyclically interval colorable. Here we prove
this conjecture.

\begin{theorem}
\label{mytheorem3.1} For any $n_{1},n_{2},\ldots,n_{r}\in
\mathbb{N}$, the graph $K_{n_{1},n_{2},\ldots,n_{r}}$ has a cyclic
interval $n$-coloring.
\end{theorem}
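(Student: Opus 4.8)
The plan is to exhibit a single explicit algebraic colouring that works uniformly for all parities of $n$ and all choices of the $n_i$, rather than splitting into cases. The idea is to embed $K_{n_1,\dots,n_r}$ into the complete graph $K_n$ on the same vertex set and use the classical ``sum colouring'' of $K_n$.

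First I would label the $n$ vertices of $G=K_{n_1,\dots,n_r}$ by the residues $0,1,\dots,n-1$ so that each part occupies a block of consecutive labels: part $V_\ell$ receives the labels $[s_\ell,\, s_\ell+n_\ell-1]$ where $s_\ell=n_1+\dots+n_{\ell-1}$. I identify the colour set $\{1,\dots,n\}$ with $\mathbb{Z}_n=\{0,1,\dots,n-1\}$, so that colour $n$ plays the role of $0$ and is cyclically adjacent to colour $1$. Then I colour each edge $\{i,j\}$ of $G$ by $\alpha(\{i,j\})=(i+j)\bmod n$. This is just the restriction to $E(G)\subseteq E(K_n)$ of the standard proper $n$-edge-colouring $\{i,j\}\mapsto(i+j)\bmod n$ of $K_n$: two edges meeting at a common vertex $v$ get colours $(v+j)\bmod n$ and $(v+j')\bmod n$, equal only when $j=j'$. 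Hence $\alpha$ is automatically proper, and deleting the within-part edges cannot destroy properness.

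The heart of the argument is the cyclic interval condition at each vertex. Fix $v$ in part $V_\ell$, occupying labels $[s,\,s+m-1]$ with $m=n_\ell$. The neighbours of $v$ are exactly the vertices $j\notin[s,\,s+m-1]$, so as $j$ ranges over $\mathbb{Z}_n\setminus[s,\,s+m-1]$ the colour $(v+j)\bmod n$ ranges over $\mathbb{Z}_n\setminus[v+s,\,v+s+m-1]$, because $j\mapsto(v+j)\bmod n$ is a bijection of $\mathbb{Z}_n$. Thus
\[
S_G(v,\alpha)=\mathbb{Z}_n\setminus\bigl([v+s,\,v+s+m-1]\bmod n\bigr),
\]
i.e. the set of \emph{missing} colours at $v$ is a block of exactly $m=n_\ell$ cyclically consecutive colours. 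Since a complete $r$-partite graph has $r\ge2$, we have $n_\ell\le n-1$, so this block is a proper cyclic interval. Either it does not wrap around, in which case it is a genuine interval of integers and the ``complement is an interval'' clause of the definition holds; or it wraps around, in which case its complement $S_G(v,\alpha)$ is itself a genuine interval of integers. In both cases $v$ satisfies the cyclic interval condition, so $\alpha$ is a cyclic interval $n$-colouring of $G$.

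I do not expect a genuine obstacle here, as the construction needs no case analysis; the one point demanding care is precisely this last verification, namely checking that the formal ``interval or co-interval'' definition faithfully encodes cyclic consecutiveness and that the wrap-around case is covered by the complementary clause. It is also worth recording the degenerate checks --- $r=1$ yields an edgeless graph, and a part with $n_\ell=n-1$ forces its vertices to have degree $1$ and hence trivially satisfy the condition --- but these are immediate.
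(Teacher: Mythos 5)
Your proposal is correct and is essentially the paper's own proof: the paper likewise labels the vertices so that each part occupies a consecutive block, colors edge $v_iv_j$ with $(i+j)\bmod n$ (writing $n$ for $0$), and verifies that the missing colors at each vertex form a block of cyclically consecutive colors determined by that vertex's part. Your $\mathbb{Z}_n$-bijection phrasing of the final verification is a slightly cleaner way to state the same computation, but the construction and argument are identical.
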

\begin{proof}
We have that  
$n=\vert V\left(K_{n_{1},n_{2},\ldots,n_{r}}\right)\vert=
\sum_{j=1}^{r}n_{j}$.
 For $0\leq i\leq r$, define a sum $\sigma(i)$ as follows:
\begin{center}
$\sigma(i)=\left\{
\begin{tabular}{ll}
$0$, & if $i=0$,\\
$\sum_{j=1}^{i}n_{j}$, & if $1\leq i\leq r$.\\
\end{tabular}%
\right.$
\end{center}
Clearly, $\sigma(r)=n$. 
Let
$V_{i}=\left\{v_{\sigma(i-1)+1},\ldots,v_{\sigma(i)}\right\}$ for
$1\leq i\leq r$.

Define an edge coloring $\alpha$ of $K_{n_{1},n_{2},\ldots,n_{r}}$
as follows: for $v_{i}v_{j}\in E\left(
K_{n_{1},n_{2},\ldots,n_{r}}\right)$, let

\begin{center}
$\alpha\left(v_{i}v_{j}\right)=\left\{
\begin{tabular}{ll}
$(i+j)\pmod {n}$, & if $i+j\neq n$,\\
$n$, & otherwise.\\
\end{tabular}%
\right.$
\end{center}

Let us prove that $\alpha$ is a cyclic interval $n$-coloring of
$K_{n_{1},n_{2},\ldots,n_{r}}$.

First note that in the coloring $\alpha$ every color is used on some
edge. Next let $v_{i}\in V_{l}$, where $1\leq l\leq r$. If $l=1$,
then, by the definition of $\alpha$, we have
$S\left(v_{i},\alpha\right)=[i+\sigma(1)+1,n]\cup
[1,i]=[i+n_{1}+1,n]\cup [1,i]$. If $1<l\leq r$, then, by the
definition of $\alpha$, we have that $S\left(v_{i},\alpha\right)$
contains colors $1,\ldots,n$ except for $i+\sigma(l-1)+1,\ldots
,i+\sigma(l)$, where colors are taken modulo $n$ and with $n$
instead of $0$. This implies that $\alpha$ is a cyclic interval
$n$-coloring of $K_{n_{1},n_{2},\ldots,n_{r}}$. $\square$
\end{proof}

\begin{corollary}
\label{mycorollary3.1} For any $n_{1},n_{2},\ldots,n_{r}\in
\mathbb{N}$, we have $K_{n_{1},n_{2},\ldots,n_{r}}\in
\mathfrak{N}_{c}$ and
\begin{center}
$w_{c}(K_{n_{1},n_{2},\ldots,n_{r}})\leq \sum_{i=1}^{r}n_{i}$.
\end{center}
\end{corollary}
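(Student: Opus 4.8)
The plan is to derive this directly from Theorem~\ref{mytheorem3.1}, which is already available to us. Recall that a graph lies in $\mathfrak{N}_{c}$ precisely when it admits a cyclic interval $t$-coloring for some positive integer $t$, and that $w_{c}(G)$ is by definition the least value of $t$ for which such a coloring exists. Theorem~\ref{mytheorem3.1} exhibits, for every choice of $n_{1},\ldots,n_{r}$, a concrete cyclic interval $n$-coloring of $K_{n_{1},n_{2},\ldots,n_{r}}$, where $n=\sum_{i=1}^{r}n_{i}$.

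First I would observe that the mere existence of this coloring settles the membership claim: since $K_{n_{1},n_{2},\ldots,n_{r}}$ has a cyclic interval $t$-coloring for the specific value $t=n$, it is cyclically interval colorable, i.e. $K_{n_{1},n_{2},\ldots,n_{r}}\in\mathfrak{N}_{c}$. The bound on $w_{c}$ then follows from the very definition of $w_{c}$ as a least value: because an admissible coloring exists with $t=n$ colors, the minimum number of colors needed can be no larger, giving $w_{c}(K_{n_{1},n_{2},\ldots,n_{r}})\leq n=\sum_{i=1}^{r}n_{i}$.

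Since all of the combinatorial content has already been carried out in the proof of Theorem~\ref{mytheorem3.1}, there is essentially no obstacle here; the only thing requiring care is unwinding the definitions of $\mathfrak{N}_{c}$ and $w_{c}$ correctly, and in particular remembering that $w_{c}$ is the least number of colors in any admissible coloring, so that a single construction immediately yields an upper bound. The one point genuinely worth flagging --- although it is not asserted by the corollary --- is that this is only an inequality: deciding whether $w_{c}(K_{n_{1},n_{2},\ldots,n_{r}})$ actually equals $n$ in general, or can be strictly smaller for some partitions, would require a separate lower-bound argument and is not needed for the statement at hand.
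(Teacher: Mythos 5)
Your proposal is correct and matches the paper exactly: the paper states this corollary without further argument, treating it as an immediate consequence of Theorem~\ref{mytheorem3.1}, and your unwinding of the definitions of $\mathfrak{N}_{c}$ and $w_{c}$ is precisely the intended reasoning. Your closing remark that the bound is only an upper bound is also apt, since the paper subsequently shows sharpness for certain cases in Corollary~\ref{mycorollary3.3}.
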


We will show that the upper bound in Corollary \ref{mycorollary3.1}
is sharp for some complete multipartite graphs. But first we need
the following result.

\begin{theorem}
\label{mytheorem3.2} If for a graph $G$, there exists a number $d$
such that $d$ divides $d_{G}(v)$ for every $v\in V(G)$ and $d$ does
not divide $\vert E(G)\vert$, then $G$ has no cyclic interval
$d k$-coloring for every $k\in \mathbb{N}$.
\end{theorem}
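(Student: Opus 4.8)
The plan is to argue by contradiction using a counting argument obtained by reducing colors modulo $d$. Suppose, for contradiction, that $G$ admits a cyclic interval $dk$-coloring $\alpha$ for some $k\in\mathbb{N}$, and write $t=dk$. Since $d\mid t$, reduction modulo $d$ is a well-defined map from the color set $\{1,\ldots,t\}$ onto the residues $\{0,1,\ldots,d-1\}$, partitioning $\{1,\ldots,t\}$ into $d$ classes, each of size $k$. The strategy is to show that each of these $d$ color classes must carry the same number of edges, namely $|E(G)|/d$, which forces $d\mid|E(G)|$ and gives the contradiction.

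The heart of the argument is a local computation at each vertex. Fix $v\in V(G)$. By the definition of a cyclic interval coloring, $S(v,\alpha)$ is a cyclic interval of $\{1,\ldots,t\}$, that is, a block of $d_G(v)$ consecutive colors, possibly wrapping around from $t$ back to $1$. I would then show that, because $d\mid t$ and $d\mid d_G(v)$, such a block meets each residue class modulo $d$ in exactly $d_G(v)/d$ colors. I expect this to be the one step requiring care: the cleanest justification is to observe that reduction modulo $d$ factors through reduction modulo $t$ (as $d\mid t$), so a set of colors that is consecutive modulo $t$ maps to a set that is consecutive modulo $d$, and the wraparound from $t$ to $1$ is consistent with the wraparound from $d-1$ to $0$ since $t\equiv 0\pmod d$. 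Consecutive integers cycle through all residues modulo $d$ in order, so any block whose length is a multiple of $d$ is distributed evenly, with exactly $d_G(v)/d$ entries in each of the $d$ classes.

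Finally, I would run a global double count. For each residue $r\in\{0,\ldots,d-1\}$, let $E_r$ denote the set of edges whose color lies in class $r$, and consider the spanning subgraph of $G$ with edge set $E_r$. Since $\alpha$ is proper, the colors on the edges at $v$ are distinct, so by the previous paragraph the degree of $v$ in this subgraph is exactly $d_G(v)/d$ for every $v$. Summing degrees gives $2|E_r|=\sum_{v\in V(G)} d_G(v)/d = 2|E(G)|/d$, whence $|E_r|=|E(G)|/d$. As $|E_r|$ is an integer and this identity holds for every residue $r$, we conclude $d\mid|E(G)|$, contradicting the hypothesis that $d$ does not divide $|E(G)|$. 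Hence no cyclic interval $dk$-coloring of $G$ can exist, for any $k\in\mathbb{N}$.
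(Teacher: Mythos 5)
Your proof is correct and follows essentially the same route as the paper: the paper counts the edges whose color is divisible by $d$ (its ``$d$-edges,'' i.e.\ your residue class $r=0$), shows each vertex $v$ is incident with exactly $d_G(v)/d$ of them because a cyclic interval of length divisible by $d$ in $\{1,\ldots,dk\}$ is equidistributed modulo $d$, and derives $|E(G)|/d\in\mathbb{Z}$ by the same handshake count. Your version merely runs the identical argument over all $d$ residue classes instead of one, and spells out the equidistribution step (wraparound consistency via $d\mid t$) that the paper leaves implicit.
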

\begin{proof}
Suppose, to the contrary, that $G$ has a cyclic interval
$d k$-coloring $\alpha$ for some $k\in \mathbb{N}$. We call an edge
$e\in E(G)$ a \emph{$d$-edge} if $\alpha(e)=d l$ for some $l\in
\mathbb{N}$. Since  
$d$ divides $d_{G}(v)$ for every $v\in V(G)$, and $\alpha$ is a
cyclic interval coloring, we have that for any $v\in V(G)$, the set
$S\left(v,\alpha\right)$ contains exactly $\frac{d_{G}(v)}{d}$
$d$-edges. Now let $m_{d}$ be the number of $d$-edges in $G$. 
Then $m_{d}=\frac{1}{2}\sum\limits_{v\in
V(G)}\frac{d_{G}(v)}{d}=\frac{\vert E(G)\vert}{d}$. Hence, $d$
divides $\vert E(G)\vert$, which is a contradiction. ~$\square$
\end{proof}

\begin{corollary}
\label{mycorollary3.2} If $G$ is an Eulerian graph and $\vert
E(G)\vert$ is odd, then $G$ has no cyclic interval $t$-coloring for
every even $t$.
\end{corollary}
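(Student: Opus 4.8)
The plan is to derive this statement as an immediate application of Theorem~\ref{mytheorem3.2}, with the specific choice $d = 2$. The entire content of the proof is the observation that the hypotheses ``$G$ is Eulerian'' and ``$\vert E(G)\vert$ is odd'' are precisely the two divisibility conditions required by that theorem when $d = 2$.

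First I would record the two elementary divisibility facts. Since $G$ is Eulerian, every vertex $v \in V(G)$ has even degree, so $2 \mid d_G(v)$ for all $v \in V(G)$; this supplies the first hypothesis of Theorem~\ref{mytheorem3.2}. Since $\vert E(G)\vert$ is odd by assumption, we have $2 \nmid \vert E(G)\vert$, which supplies the second hypothesis. Thus the premises of Theorem~\ref{mytheorem3.2} are satisfied with $d = 2$.

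Next I would invoke Theorem~\ref{mytheorem3.2} to conclude that $G$ has no cyclic interval $2k$-coloring for any $k \in \mathbb{N}$. Finally I would observe that the even positive integers are exactly the numbers of the form $t = 2k$ with $k \in \mathbb{N}$, so the conclusion ``no cyclic interval $2k$-coloring for every $k$'' is literally the statement ``no cyclic interval $t$-coloring for every even $t$,'' which completes the argument.

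There is no genuine obstacle here: the statement is a direct specialization of the preceding theorem, and the only real step is recognizing that $d = 2$ simultaneously encodes the even-degree (Eulerian) condition and the failure of $2$ to divide an odd edge count. Consequently I expect the write-up to be a single short paragraph citing Theorem~\ref{mytheorem3.2} with $d = 2$, rather than anything requiring a new idea.
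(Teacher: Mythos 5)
Your proposal is correct and matches the paper's intended argument exactly: the corollary is stated immediately after Theorem~\ref{mytheorem3.2} with no separate proof, precisely because it is the specialization $d=2$ (Eulerian $\Rightarrow$ $2 \mid d_G(v)$ for all $v$, odd $\vert E(G)\vert$ $\Rightarrow$ $2 \nmid \vert E(G)\vert$, so no cyclic interval $2k$-coloring exists for any $k \in \mathbb{N}$). Your write-up simply makes this implicit deduction explicit, and there is nothing to add or correct.
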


\begin{example}
\label{example}
{\em The graph consisting of 
three edge-disjoint triangles, where any two triangles have
the same common vertex $v$, has a cyclic interval
$7$-coloring: we color the first triangle by colors 
$1,2,3$ so that $1$ and $3$ appear at $v$, color the second triangle by colors $2,3,4$ so that $2$ and $4$ appear at $v$, and we color the third triangle by colors $5,6,7$ so that $5$ and $7$ appear at $v$. 
This yields a cyclic interval $7$-coloring of the graph. 
However, by Corollary \ref{mycorollary3.2}, this graph
does not admit a cyclic interval $8$-coloring. }
\end{example}

The next result shows that the upper bound in Corollary
\ref{mycorollary3.1} is sharp.

\begin{corollary}
\label{mycorollary3.3} If $r,n_{2},\ldots,n_{r}$ and $\vert
E\left(K_{1,n_{2},\ldots,n_{r}}\right)\vert$ are odd, then
\begin{center}
$w_{c}(K_{1,n_{2},\ldots,n_{r}})=1+ \sum_{i=2}^{r}n_{i}$.
\end{center}
\end{corollary}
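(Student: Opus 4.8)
The plan is to match the upper bound $w_c(G) \le n$ coming from Corollary \ref{mycorollary3.1} (where $G = K_{1,n_2,\ldots,n_r}$ and $n = 1 + \sum_{i=2}^r n_i$) with a lower bound $w_c(G) \ge n$. The lower bound will rest on two ingredients: the trivial fact that any proper coloring uses at least $\Delta(G)$ colors, and the parity obstruction of Corollary \ref{mycorollary3.2}, which forbids cyclic interval colorings with an even number of colors for an Eulerian graph having an odd number of edges.

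First I would settle all the relevant parities. Since $r$ is odd, the number of parts $V_2,\ldots,V_r$ equals $r-1$, which is even; as each $n_i$ ($2 \le i \le r$) is odd, the sum $\sum_{i=2}^r n_i$ is a sum of an even number of odd integers and is therefore even, whence $n = 1 + \sum_{i=2}^r n_i$ is odd. Next I would compute the degrees: the unique vertex of $V_1$ has degree $\sum_{i=2}^r n_i = n-1$, while every vertex of $V_j$ (for $j \ge 2$) has degree $n - n_j$. Because $n$ is odd and each $n_j$ is odd, every $n - n_j$ is even, and $n-1$ is even as well; hence all degrees are even and $G$ is Eulerian. Since $n_j \ge 1$ for every $j$, the maximum degree is $\Delta(G) = n-1$, attained at the vertex of $V_1$.

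The lower bound then follows at once. Any cyclic interval $t$-coloring is in particular a proper edge coloring, so the $\Delta(G) = n-1$ edges incident with the vertex of $V_1$ must carry distinct colors from $\{1,\ldots,t\}$; thus $t \ge n-1$ and so $w_c(G) \ge n-1$. But $G$ is Eulerian and $\vert E(G)\vert$ is odd by hypothesis, so Corollary \ref{mycorollary3.2} rules out every cyclic interval $t$-coloring with $t$ even. As $n-1$ is even, the value $t = n-1$ is excluded, forcing $w_c(G) \ge n$. Together with $w_c(G) \le n = 1 + \sum_{i=2}^r n_i$ from Corollary \ref{mycorollary3.1}, this gives $w_c(G) = 1 + \sum_{i=2}^r n_i$, as required.

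This is essentially a parity squeeze, so there is no deep obstacle; the only point needing care is the bookkeeping that makes the two bounds coincide. Specifically, one must verify simultaneously that $G$ is genuinely Eulerian (all degrees even) and that $\Delta(G) = n-1$ is even. It is precisely the oddness of $n$ — itself forced by $r$ and all the $n_i$ being odd — that supplies both the Eulerian property and the fact that the forbidden even value $n-1$ lies exactly one step below the upper bound $n$, leaving $n$ as the unique possibility.
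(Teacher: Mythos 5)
Your proof is correct and takes essentially the same route as the paper's: the lower bound $w_c(G)\geq \Delta(G)=\sum_{i=2}^{r}n_i$, the exclusion of this even value via Corollary \ref{mycorollary3.2} (since $G$ is Eulerian with $\vert E(G)\vert$ odd), and the matching upper bound from Corollary \ref{mycorollary3.1}. The only difference is that you spell out the parity bookkeeping (that $r$ odd and all $n_i$ odd force $n$ odd, all degrees even, and $\Delta(G)=n-1$ even) which the paper dismisses as ``clearly''.
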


\begin{proof}
Clearly, the graph $G=K_{1,n_{2},\ldots,n_{r}}$ is Eulerian with the
maximum degree $\Delta(G)=\sum_{i=2}^{r}n_{i}$, which implies that  
$w_c(G)\geq \sum_{i=2}^{r}n_{i}$. 
By Corollary \ref{mycorollary3.2}, $G$ has no cyclic
interval $\Delta(G)$-coloring, since $|E(G)|$ is odd. This and
Corollary \ref{mycorollary3.1} imply that $w_{c}(G)=1+
\sum_{i=2}^{r}n_{i}$.~$\square$
\end{proof}
\bigskip

\section{Bipartite graphs without cyclic interval colorings}\

In this section we present some final observations; we give an
example of a simple bipartite graph with maximum degree $14$ without
a cyclic interval coloring. In terms of maximum degree, this is an
improvement of the smallest previously known  bipartite example which
has maximum degree $28$ \cite{Nadol}. We also give a similar example
for bipartite graphs with multiple edges. 
Our examples use graphs
without interval colorings earlier constructed in \cite{PetKhach}.
For non-bipartite examples of graphs without cyclic interval
colorings, see \cite{PetMkhit}.

So let us consider the graph $G$ in Fig. \ref{cyc-int-non sim
bigraph}. Clearly, $\Delta(G)=14$ and $\vert V(G)\vert=21$.

\begin{figure}[ht]
\begin{center}
\includegraphics[width=20pc,height=17pc]{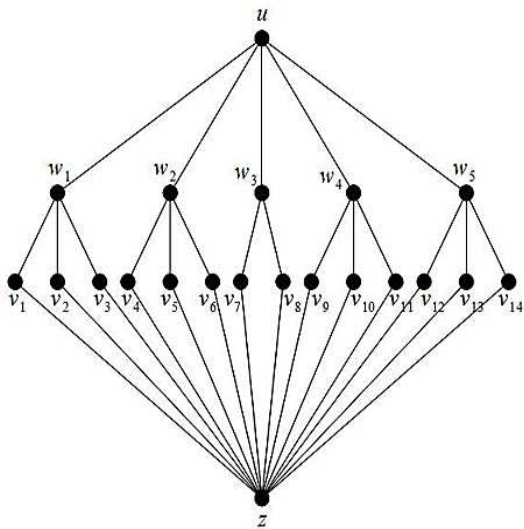}\\
\caption{The cyclically interval non-colorable simple bipartite
graph $G$.}\label{cyc-int-non sim bigraph}
\end{center}
\end{figure}

\begin{proposition}
\label{proposition4.1} The graph $G$
in Figure 1 is not cyclically interval colorable.
\end{proposition}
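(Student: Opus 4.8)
The plan is to assume, for contradiction, that $G$ carries a cyclic interval $t$-coloring $\alpha$ for some $t$, and then to eliminate every admissible value of $t$. First I would bound the range of $t$. Since $G$ is bipartite, K\H{o}nig's theorem gives $\chi'(G)=\Delta(G)=14$, so $t\geq 14$; and since $G$ is simple, triangle-free, and has $\vert V(G)\vert =21$, the bound of Petrosyan and Mkhitaryan \cite{PetMkhit} recalled in the introduction gives $t\leq \vert V(G)\vert+\Delta(G)-2=33$. Thus it remains to rule out each $t$ with $14\leq t\leq 33$.

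The main idea is to exploit the fact that $G$ is assembled from a bipartite graph $H$ possessing no interval coloring (one of the graphs of \cite{PetKhach}), where the additional edges and low-degree vertices visible in Figure~\ref{cyc-int-non sim bigraph} serve only to suppress the cyclic wrap-around. Concretely, I would establish a local statement: in any cyclic interval $t$-coloring of $G$, the colors incident to each vertex carrying the interval obstruction of $H$ form a genuine, non-wrapping interval $[a,b]$ rather than a cyclically consecutive set $\{a,\dots,t\}\cup[1,b]$ that passes between color $t$ and color $1$. Granting this, a single cyclic rotation of the palette places every color used on $E(H)$ into an initial segment without wrapping, so the restriction of $\alpha$ to $E(H)$ becomes an ordinary interval coloring of $H$, contradicting the choice of $H$.

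To prove the local statement I would work vertex by vertex through the attachment gadgets: the cyclic interval condition at a vertex of degree $d$ says that its colors are $d$ cyclically consecutive values, and the gadget fixes enough of these (through the forced colors at adjacent vertices of maximum degree) that the remaining free colors cannot straddle the $t$--$1$ junction. I expect this no-wrap-around step to be the crux, precisely because it is the extra freedom of cyclic over ordinary interval colorings that the construction is designed to defeat; making it rigorous requires a finite case analysis over the admissible color windows at the critical vertices. For the values of $t$ least amenable to this analysis --- in particular even $t$, when the critical subgraph has all degrees divisible by a common $d$ while its number of edges is not --- I would instead invoke the divisibility criterion of Theorem~\ref{mytheorem3.2} to exclude them directly, so that the two arguments together cover the whole range $14\leq t\leq 33$.
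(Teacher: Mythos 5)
There is a genuine gap in your proposal, and it sits exactly at the step you yourself flag as the crux. The no-wrap-around lemma --- that in any cyclic interval $t$-coloring of $G$ the palette at each critical vertex is a linear, non-wrapping interval, so that after one rotation the restriction of $\alpha$ to the underlying interval-non-colorable graph $H$ becomes a genuine interval coloring of $H$ --- is only announced, never proved; the ``finite case analysis over the admissible color windows'' you defer is the entire content of the proposition. Moreover, even granting non-wrapping at the critical vertices, the reduction is not automatic: at vertices of $H$ whose degree drops when the gadget edges are removed, the surviving colors need not form an interval at all, and the claim that a single rotation simultaneously unrolls every relevant palette amounts to asserting that some color is missed globally by the edges in question --- which is essentially what has to be proved, not assumed. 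Your fallback for the recalcitrant values of $t$ is also unsound: Theorem~\ref{mytheorem3.2} requires a $d\geq 2$ dividing $d_G(v)$ for \emph{every} vertex $v$ of $G$, but the graph of Figure~\ref{cyc-int-non sim bigraph} has vertices of degrees $2$, $4$, $5$ and $14$ (in particular the degree-$5$ vertex $u$), so only $d=1$ qualifies; and the theorem cannot be applied to a ``critical subgraph,'' because the restriction of a cyclic interval coloring of $G$ to a subgraph is in general not a cyclic interval coloring of that subgraph. So your argument does not actually exclude any value of $t$, even or odd.

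For comparison, the paper's proof is a short window-propagation argument that dispatches all $t\geq 14$ at once. Take $\alpha$ a cyclic interval $t$-coloring with $t$ minimal, so every color appears on some edge; after a cyclic rotation of the palette one may assume $S_G(u,\alpha)=[1,5]$ at the degree-$5$ vertex $u$, with $\alpha(uw_j)=i_j\in[1,5]$. Since $d_G(w_j)\leq 4$, every edge joining $w_j$ to a neighbor in $\{v_1,\dots,v_{14}\}$ has its color in $\{i_j-3,\dots,i_j+3\}$ (mod $t$), and since each $v_i$ has degree $2$, every edge $zv_i$ has its color in $\{i_j-4,\dots,i_j+4\}$ (mod $t$). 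Because $t\geq 14$, all these windows lie inside $\{t-3,\dots,t\}\cup[1,9]$, so color $10$ appears on no edge of $G$, contradicting minimality. Note that this makes both ingredients of your plan unnecessary: no upper bound on $t$ is needed (your bound $t\leq 33$ via the Petrosyan--Mkhitaryan inequality is correct but superfluous), no case distinction over $t$ arises, and the interval non-colorability of the underlying graph of \cite{PetKhach} is only the motivation for the construction, not an ingredient of the proof.
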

\begin{proof}
Suppose, to the contrary, that $G$ has a cyclic interval
$t$-coloring $\alpha$ for some $t\geq 14$. Assume further that
$\alpha$ is such a coloring using a minimum number of colors; then
every color appears on at least one edge. 
We may assume that $S_{G}(u,\alpha)=[1,5]$ and
$\alpha(uw_{j})=i_j$ and $1\leq i_j\leq 5$, for $j=1,\ldots,5$.
Let $N_{w_j}$ denote the neighbors of $w_j$ in $\{v_1,\dots, v_{14}\}$
and consider the edges incident with $w_j$ and some vertex from $N_{w_j}$.
Any such edge
can receive colors only from the
set $\{i_j-3, i_j-2, \dots, i_j+3\}$, and therefore 
any edge joining $z$ with a vertex of $N_{w_j}$
can receive colors only from the set $\{i_j-4, i_j-3 \dots, i_j+4\}$,
where $(-)$ and $(+)$ denote subtraction modulo $t$ and addition
modulo $t$, respectively. This implies that none of the edges
incident with a vertex from $N_{w_j}$ is
colored $10$. 
$\square$
\end{proof}

Similarly, it can be shown that for any positive integer $\Delta\geq
14$, there exists a simple bipartite graph $G$ such that $G\notin
\mathfrak{N}_{c}$ and $\Delta(G)=\Delta$. On the other hand, by
Corollary \ref{corollary1.2.3} every bipartite graph with maximum
degree $4$ has a cyclic interval $4$-coloring. So, it is natural to
consider the following:

\begin{problem}\label{ourproblem1}
Is there a simple bipartite graph $G$ such that $5\leq \Delta(G)\leq 13$
and $G\notin \mathfrak{N}_{c}$?
\end{problem}
\bigskip

\begin{figure}[h]
\begin{center}
\includegraphics[width=15pc,height=20pc]{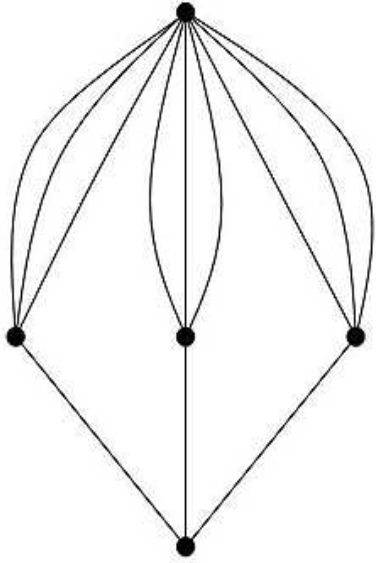}\\
\caption{The cyclically interval non-colorable bipartite graph
$H$.}\label{cyc-int-non bigraph}
\end{center}
\end{figure}

Let us now consider the corresponding problem for bipartite graphs
with multiple edges.
It was proved in \cite{PetKhach} that all
bipartite graphs with at most four vertices have interval colorings;
so all these graphs admit cyclic interval colorings. On the other
hand, it is easy to see that the bipartite graph $H$ with $\vert
V(H)\vert=5$ and $\Delta(H)=9$ shown in Fig. \ref{cyc-int-non
bigraph} has no cyclic interval coloring. We now prove a more
general result.

Let us define graphs $H_{p,q}$ ($p,q\in \mathbb{N}$) as follows:
$V\left(H_{p,q}\right)=\{x,y_{1},\ldots,y_{q},z\}$ and
$E\left(H_{p,q}\right)$ contains pairs of vertices $x$ and $y_{i}$,
which are joined by $p$ edges and the edges $y_{i}z$ for $1\leq
i\leq q$. Clearly, $H_{p,q}$ is a connected bipartite graph with
$\vert V(H_{p,q})\vert=q+2$, $\Delta (H_{p,q})=d(x)=p q$, and
$d(z)=q$, $d(y_{i})=p+1$, $i=1,\ldots,q$. Fig. \ref{The graph
H_{3,4}} shows the graph $H_{3,4}$.

\begin{figure}[h]
\begin{center}
\includegraphics[width=20pc,height=15pc]{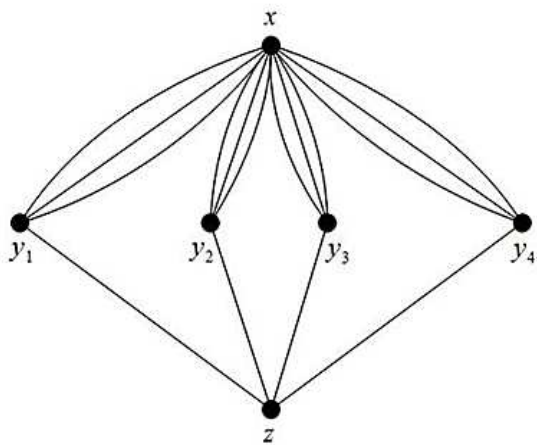}\\
\caption{The graph $H_{3,4}$.}\label{The graph H_{3,4}}
\end{center}
\end{figure}

\begin{proposition}
\label{proposition4.2} If $p q > 2p+q$, 
then $H_{p,q}\notin \mathfrak{N}_{c}$.
\end{proposition}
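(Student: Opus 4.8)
The plan is to argue by contradiction, exploiting that the single high--degree vertex $x$ forces $pq$ distinct colours into a narrow band around the mere $q$ colours seen at $z$. Suppose $H_{p,q}$ has a cyclic interval $t$--colouring $\alpha$. Since $x$ has degree $pq$, its $pq$ incident edges receive distinct colours, so $t\ge pq$; moreover, by the definition of a cyclic interval colouring, at every vertex the colours on the incident edges form a cyclic interval, i.e.\ an arc of the cyclic order $1,2,\dots,t,1$. Thus $S(x,\alpha)$ is an arc of length $pq$, $S(z,\alpha)$ is an arc of length $q$, and for each $i$ the set $S(y_i,\alpha)$ is an arc of length $p+1$ (recall $d(y_i)=p+1$). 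Write $c_i=\alpha(y_iz)$; then $c_i\in S(z,\alpha)$ and $c_i\in S(y_i,\alpha)$ for every $i$.

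The key step I would isolate is that \emph{every} colour occurring at $x$ lies within cyclic distance $p$ of the arc $S(z,\alpha)$. Indeed, each colour $\gamma\in S(x,\alpha)$ is the colour of some edge $xy_i$, because $x$ is adjacent only to the vertices $y_1,\dots,y_q$; hence $\gamma\in S(y_i,\alpha)$. Since $S(y_i,\alpha)$ is an arc of length $p+1$ containing both $\gamma$ and $c_i$, the cyclic distance between $\gamma$ and $c_i$ is at most $p$. As $c_i\in S(z,\alpha)$, this shows that $\gamma$ is within cyclic distance $p$ of the arc $S(z,\alpha)$.

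Consequently $S(x,\alpha)$ is contained in the set $N$ of all colours lying at cyclic distance at most $p$ from the arc $S(z,\alpha)$. But $N$ is obtained from an arc of length $q$ by extending it $p$ steps in each direction, so $\vert N\vert\le q+2p$ regardless of whether or not it wraps around $\{1,\dots,t\}$ (if it wraps, then $N$ is all of $\{1,\dots,t\}$ and $t\le q+2p$). Hence
$$pq=\vert S(x,\alpha)\vert\le \vert N\vert\le 2p+q,$$
contradicting the hypothesis $pq>2p+q$. This contradiction shows $H_{p,q}\notin\mathfrak{N}_c$.

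I expect no genuine obstacle here; the only care needed is bookkeeping with the cyclic metric. One should check carefully that from the definition the colours at each vertex really do form a single arc of $\{1,\dots,t\}$ (so that ``two colours inside a window of length $p+1$ differ by cyclic distance at most $p$'' and ``the $p$--neighbourhood of a length-$q$ arc has at most $q+2p$ colours'' are both legitimate, even allowing for wraparound). Note that no hypothesis on $t$ beyond $t\ge pq$ is used, since the estimate $\vert N\vert\le q+2p$ is independent of how many colours the colouring actually employs; in particular the argument rules out cyclic interval colourings with an arbitrary number of colours at once.
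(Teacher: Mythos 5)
Your proof is correct and rests on the same key observation as the paper's: in a cyclic interval colouring, every colour on a parallel edge $xy_i$ lies within cyclic distance $p$ of $\alpha(y_iz)\in S(z,\alpha)$, so all $pq$ colours at $x$ are confined to a band of at most $2p+q$ colours around the arc $S(z,\alpha)$. The only difference is the finish: the paper assumes $t$ is minimal (so that every colour is used), normalizes $S(z,\alpha)=[1,q]$, and exhibits the explicit colour $pq-p$ that no edge of $H_{p,q}$ can receive, whereas you count directly that $pq=\vert S(x,\alpha)\vert\leq 2p+q$ --- a marginally cleaner ending that needs neither the minimality assumption nor the normalization, but not a genuinely different route.
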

\begin{proof} Suppose, to the contrary, that $H_{p,q}$ has a cyclic
interval $t$-coloring $\alpha$ for some $t\geq p q$. Assume
further that $\alpha$ is such a coloring using a minimum number of
colors; then every color appears on at least one edge. 
We may assume that $S(z,\alpha)=[1,q]$ and
$\alpha(zy_{j})=i_j$ and $1\leq i_j\leq q$, for $j=1,\ldots,q$.
Consider the edges incident with the vertex $y_j$. Clearly, all
parallel edges $xy_{j}$ can receive colors only from the 
set
$\{i_j-p, i_j-p+1,\dots, i_j+p\}$, 
where $(-)$ and $(+)$ denote subtraction modulo $t$
and addition modulo $t$, respectively. Since $p q > 2p+q$, we have that none of the edges $xy_{j}$ and $zy_{j}$ is colored $p q-p$. 
$\square$
\end{proof}

Using similar arguments as in the proof of Proposition
\ref{proposition4.2}, it can be shown that for any positive integer
$\Delta\geq 9$, there exists a bipartite graph $G$ such that
$G\notin \mathfrak{N}_{c}$ and $\Delta(G)=\Delta$. So, it is natural
to consider the following:

\begin{problem}\label{ourproblem2}
Is there a bipartite graph $G$ such that $5\leq \Delta(G)\leq 8$ and
$G\notin \mathfrak{N}_{c}$?
\end{problem}
\bigskip

\begin{acknowledgement}
The authors would like to thank the referees for their helpful
comments and suggestions.

The third author would like to thank Link\"oping University for the hospitality and nice environment. The work of the third author was made possible by a research grant from the Armenian National Science and Education Fund (ANSEF) based in New York, USA.
\end{acknowledgement}
\bigskip

\end{document}